\newtheorem{theorem}{Theorem}[section]
\newtheorem{lemma}[theorem]{Lemma}
\newtheorem{corollary}[theorem]{Corollary}
\newtheorem{proposition}[theorem]{Proposition}
\theoremstyle{definition}
\newtheorem{example}[theorem]{Example}
\newtheorem{remark}[theorem]{Remark}
\numberwithin{equation}{section}
\begin{document}
\title[Heat kernel on smooth metric measure spaces] {Heat kernel on smooth metric measure spaces with nonnegative curvature}
\author{Jia-Yong Wu}
\address{Department of Mathematics, Shanghai Maritime University, Haigang Avenue 1550, Shanghai 201306, P. R. China}
\address{Department of Mathematics, Cornell University, Ithaca, NY 14853, United States} \email{jywu81@yahoo.com}

\

\author{Peng Wu}
\address{Department of Mathematics, Cornell University, Ithaca, NY 14853, United States}
\email{wupenguin@math.cornell.edu}

\thanks{}
\subjclass[2010]{Primary 35K08; Secondary 53C21, 58J35.}
\dedicatory{}
\date{\today}

\keywords{Bakry-\'{E}mery Ricci curvature, heat kernel, smooth metric measure space, Liouville theorem, $L_f^1$-uniqueness, gradient Ricci soliton, Moser iteration.}
\begin{abstract}
We derive a local Gaussian upper bound for the $f$-heat kernel on complete smooth metric measure space $(M,g,e^{-f}dv)$ with nonnegative Bakry-\'{E}mery Ricci curvature. As applications, we obtain a sharp $L_f^1$-Liouville theorem for $f$-subharmonic functions and an $L_f^1$-uniqueness property for nonnegative
solutions of the $f$-heat equation, assuming $f$ is of at most quadratic growth. In particular, any $L_f^1$-integrable $f$-subharmonic function on gradient shrinking and steady Ricci solitons must be constant. We also provide explicit $f$-heat kernel for Gaussian solitons.
\end{abstract}
\maketitle

\section{Introduction and main results}

In this paper we study Gaussian upper estimates for the $f$-heat kernel on smooth metric measure spaces with nonnegative Bakry-\'{E}mery Ricci curvature and their applications. Recall that a complete smooth metric measure space is a triple $(M,g,e^{-f}dv)$, where $(M,g)$ is an $n$-dimensional complete Riemannian manifold, $dv$ is the volume element of $g$, $f$ is a smooth function on $M$, and $e^{-f}dv$ (for short, $d\mu$) is called the weighted volume element or the weighted measure. The $m$-Bakry-\'{E}mery Ricci curvature \cite{[BE]} associated to $(M,g,e^{-f}dv)$ is defined by
\[
\mathrm{Ric}_f^m=\mathrm{Ric}+\nabla^2f-\frac{1}{m}df\otimes df,
\]
where $\mathrm{Ric}$ is the Ricci curvature of the manifold, $\nabla^2$ is the Hessian with respect to the metric $g$ and $m$ is a constant. We refer the readers to \cite{[BQ1]}, \cite{[LD]}, \cite{[Lott1]}, and \cite{[Lo-Vi]} for further details. When $m=\infty$, we write $\mathrm{Ric}_f=\mathrm{Ric}_f^{\infty}$. Smooth metric measure spaces are closely related to gradient Ricci solitons, the Ricci flow, probability theory, and optimal transport. A smooth metric measure space $(M,g, e^{-f}dv)$ is said to quasi-Einstein if
\[
\mathrm{Ric}_f^m=\lambda g
\]
for some constant $\lambda$. When $m=\infty$, it is exactly a gradient Ricci soliton. A gradient Ricci soliton is called expanding, steady or shrinking if $\lambda<0$, $\lambda=0$, and $\lambda>0$, respectively. Ricci solitons are natural extensions of Einstein manifolds and have drawn more and more attentions. See \cite{[Cao1]} for a nice survey and references therein.

The associated $f$-Laplacian $\Delta_f$ on a smooth metric measure space is defined as
\[
\Delta_f=\Delta-\nabla f\cdot\nabla,
\]
which is self-adjoint with respect to the weighted measure. On a smooth metric measure space, it is natural to consider the $f$-heat equation
\[
(\partial_t-\Delta_f)u=0
\]
instead of the heat equation. If $u$ is independent of time $t$, then $u$ is a $f$-harmonic function. Throughout this paper we denote by $H(x,y,t)$ the $f$-heat kernel, that is, for each $y\in M$, $H(x,y,t)=u(x,t)$ is the minimal positive solution of the $f$-heat equation with $\lim_{t\to 0}u(x,t)=\delta_{f,y}(x)$, where $\delta_{f,y}(x)$ is defined by
\[
\int_M\phi(x)\delta_{f,y}(x)e^{-f}dv=\phi(y)
\]
for $\phi\in C_0^{\infty}(M)$. Equivalently, $H(x,y,t)$ is the kernel of the semigroup $P_t=e^{t\Delta_f}$ associated to the Dirichlet energy $\int_M |\nabla \phi|^2e^{-f}dv$, where $\phi\in C_0^{\infty}(M)$. In general the $f$-heat kernel always exists on complete smooth metric measure spaces, but it may not be unique.
\vspace{0.5cm}

When $f$ is constant, then $H(x,y,t)$ is just the heat kernel for the Riemannian manifold $(M,g)$. Cheng, Li and Yau \cite{[ChLiYau]} obtained uniform Gaussian estimates for the heat kernel on Riemannian manifolds with sectional curvature bounded below, which was later extended by Cheeger, Gromov and Taylor \cite{[ChGrTa]} to manifolds with bounded geometry. In 1986, Li and Yau \cite{[Li-Yau1]} proved sharp Gaussian upper and lower bounds on Riemannian manifolds of nonnegative Ricci curvature, using the gradient estimate and the Harnack inequality. Grigor'yan and Saloff-Coste \cite{[Grig], [Saloff], [Saloff1], [Saloff2]}
independently proved similar estimates on Riemannian manifolds satisfying the volume doubling property and the Poincar\'e inequality, using the Moser iteration technique. Davies \cite{[Davies2]} further developed Gaussian upper bounds under a mean value property assumption. Recently, Li and Xu \cite{[LiXu]} also obtained some new estimates on complete Riemannian manifolds with Ricci curvature bounded from below by further improving the Li-Yau gradient estimate.

Recently, there have been several work on $f$-heat kernel estimates on smooth metric measure spaces and its applications. In \cite{[LD]}, X.-D. Li obtained Gaussian estimates for the $f$-heat kernel, and proved an $L_f^1$-Liouville theorem, assuming $\mathrm{Ric}^m_f$ ($m<\infty$) bounded below by a negative quadratic function, which generalizes a classical result of P. Li \cite{[Li0]}. He also mentioned that we may not be able to prove an $L_f^1$-Liouville theorem only assuming a lower bound on $\mathrm{Ric}_f$. The main difficulty is the lack of effective upper bound for the $f$-heat kernel. In \cite{[ChLu]}, by analyzing the heat kernel for a family of warped product manifolds, Charalambous and Lu also gave $f$-heat kernel estimates when $\mathrm{Ric}^m_f$ ($m<\infty$) is bounded below. In \cite{[Wu2]}, the first author proved $f$-heat kernel estimates assuming $Ric_f$ bounded below by a negative constant and $f$ bounded.

\

In this paper we prove a local Gaussian upper bound for the $f$-heat kernel on smooth metric measure spaces with $\mathrm{Ric}_f\geq 0$, which generalizes the classical result of Li-Yau \cite{[Li-Yau1]}.

\begin{theorem}\label{Main1}
Let $(M,g,e^{-f}dv)$ be an $n$-dimensional complete noncompact smooth metric measure space with $\mathrm{Ric}_f\geq 0$. Fix a fixed point $o\in M$ and $R>0$. For any $\epsilon>0$, there exist constants $c_1(n,\epsilon)$ and $c_2(n)$, such that
\begin{equation}\label{uppfu1}
H(x,y,t)\leq\frac{c_1(n,\epsilon)\,e^{c_2(n)A(R)}}{V_f(B_x(\sqrt{t}))^{1/2}V_f(B_y(\sqrt{t}))^{1/2}}
\times\exp\left(-\frac{d^2(x,y)}{(4+\epsilon)t}\right)
\end{equation}
for all $x,y\in B_o(\frac 12R)$ and $0<t<R^2/4$, where $\lim_{\epsilon\to 0}c_1(n,\epsilon)=\infty$. In particular, there exist constants $c_3(n,\epsilon)$ and $c_4(n)$, such that
\begin{equation}\label{lowhe2}
H(x,y,t)\leq\frac{c_3(n,\epsilon)\,e^{c_4(n)A(R)}}{V_f(B_x(\sqrt{t}))}\cdot\left(\frac{d(x,y)}{\sqrt{t}}+1\right)^{\frac n2}
\times\exp\left(-\frac{d^2(x,y)}{(4+\epsilon)t}\right)
\end{equation}
for any $x,y\in B_o(\frac 14R)$ and $0<t<R^2/4$, where $\lim_{\epsilon\to 0}c_3(n,\epsilon)=\infty$. Here $A(R):=\sup_{x\in B_o(3R)}|f|(x)$.
\end{theorem}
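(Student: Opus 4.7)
The plan is to follow the Grigor'yan--Saloff-Coste route to Gaussian upper bounds, adapted to the weighted setting: verify local $f$-volume doubling and a local weighted Neumann--Poincar\'{e} inequality on balls inside $B_o(R)$, run Moser iteration on $\partial_t-\Delta_f$ to obtain a local $L^2_f$ mean value estimate, and then invoke Davies' integral maximum principle to install the Gaussian factor $\exp(-d^{2}/(4+\epsilon)t)$. Because the only curvature hypothesis is $\mathrm{Ric}_f\geq 0$ and no bound is available on $\nabla f$, each of the structural inequalities must be proved locally with constants depending on $A(R)=\sup_{B_o(3R)}|f|$; this is the source of the $e^{c(n)A(R)}$ factor in (\ref{uppfu1}).

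First, I would apply the Wei--Wylie $f$-volume comparison: under $\mathrm{Ric}_f\geq 0$ and $|f|\leq A(R)$ on $B_o(3R)$, one has
\[
\frac{V_f(B_x(s))}{V_f(B_x(r))}\leq e^{c(n)A(R)}\Bigl(\frac{s}{r}\Bigr)^{n},\qquad x\in B_o(R),\ 0<r\leq s\leq R,
\]
which in particular yields doubling with constant $e^{c(n)A(R)}$ on these balls. A Buser-type argument combined with a Whitney covering upgrades this to a local weighted $(2,2)$ Neumann--Poincar\'{e} inequality with constants of the same form. Equipped with these two ingredients, I would run Moser's iteration for nonnegative subsolutions of $\partial_t-\Delta_f$ on parabolic cylinders $Q_r=B_x(r)\times(t_0-r^{2},t_0)\subset B_o(R)\times(0,R^{2})$, exploiting the self-adjointness of $\Delta_f$ in $L^{2}(e^{-f}dv)$, which ensures that integration by parts against a space-time cutoff $\psi$ produces only $|\nabla\psi|^{2}$ terms and no $\nabla f$ terms. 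The result is a mean value estimate of the form
\[
\sup_{Q_{r/2}}u^{2}\leq\frac{Ce^{c(n)A(R)}}{V_f(B_x(r))\,r^{2}}\int_{Q_r}u^{2}\,e^{-f}dv\,dt,
\]
and applying it to $u(\cdot,s)=H(\cdot,y,s)$ together with the symmetry of $H$ yields an on-diagonal bound $H(x,x,t)\leq Ce^{c(n)A(R)}/V_f(B_x(\sqrt t))$.

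To insert the Gaussian factor, I would run Davies' twisted semigroup argument. For a bounded Lipschitz $\psi$ with $|\nabla\psi|\leq\xi$, applying the mean value estimate to $v(\cdot,s)=e^{-\psi}(P_s e^{\psi}\phi)$ and using the standard Davies bound $\|e^{-\psi}P_s e^{\psi}\|_{L^{2}_f\to L^{2}_f}\leq e^{c\xi^{2}s}$ promotes the on-diagonal estimate to
\[
H(x,y,t)\leq\frac{Ce^{c(n)A(R)}}{V_f(B_x(\sqrt t))^{1/2}V_f(B_y(\sqrt t))^{1/2}}\exp\!\bigl(-\psi(x)+\psi(y)+c\xi^{2}t\bigr);
\]
choosing $\psi(z)=\xi\,d(z,y)$ and optimizing $\xi$ in terms of $d(x,y)/t$ produces (\ref{uppfu1}), with the residual blow-up $c_1(n,\epsilon)\to\infty$ as $\epsilon\to 0$ arising from the optimization exactly as in the classical Li--Yau setting. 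Estimate (\ref{lowhe2}) then follows from (\ref{uppfu1}) by using the doubling inequality on the ball of radius $d(x,y)+\sqrt t$ to bound $V_f(B_y(\sqrt t))^{1/2}$ below in terms of $V_f(B_x(\sqrt t))^{1/2}$, at the cost of a polynomial factor $(d(x,y)/\sqrt t+1)^{n/2}$ and an extra $e^{c(n)A(R)}$, both of which are harmless. The hard part is careful bookkeeping of the $e^{c(n)A(R)}$ factors so that they land only in the prefactor and never in the Gaussian exponent; this is arranged by repeatedly using $e^{-A(R)}dv\leq d\mu\leq e^{A(R)}dv$ on $B_o(3R)$ to pass between weighted and unweighted integrals, and by an $\epsilon$-absorption via Young's inequality in the final optimization step.
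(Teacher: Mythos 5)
Your proposal is correct in outline and follows the same Grigor'yan--Saloff-Coste skeleton as the paper (local $f$-volume doubling from the Wei--Wylie comparison, local Neumann--Poincar\'e and Sobolev inequalities with constants $e^{c(n)A(R)}$, Moser iteration for the $f$-heat operator, then Davies' method for the Gaussian factor), but the step that installs the factor $\exp(-d^2(x,y)/(4+\epsilon)t)$ is implemented differently. The paper does \emph{not} use the twisted semigroup $e^{-\psi}P_te^{\psi}$; instead it proves the integrated Davies--Gaffney estimate
\[
\int_{B_1}\int_{B_2}H(x,y,t)\,d\mu\,d\mu\le V_f(B_1)^{1/2}V_f(B_2)^{1/2}\exp\Bigl(-\lambda_1 t-\frac{d^2(B_1,B_2)}{4t}\Bigr)
\]
via the monotonicity of $J(t)=\int u^2e^{\xi}d\mu$ with $\xi=\alpha d(x,B_1)-\tfrac{\alpha^2}{2}t$, and then localizes by applying the mean value inequality twice, once in each variable, with $B_1=B_x(\sqrt t)$, $B_2=B_y(\sqrt t)$; the $\epsilon$-loss comes from replacing $d(B_1,B_2)=d(x,y)-2\sqrt t$ by $d(x,y)$. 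For this route the paper needs the $L^1_f$ mean value inequality (which it obtains from the $L^2_f$ one by a second iteration in Proposition~\ref{PoinDouHarn}), whereas your route only needs the $L^2_f$ version. The trade-off is that your route has a technical wrinkle you gloss over: $v(\cdot,s)=e^{-\psi}P_s(e^{\psi}\phi)$ is a solution of the conjugated operator $\partial_s-e^{-\psi}\Delta_f e^{\psi}$, not of the $f$-heat equation, so the mean value inequality of Proposition~\ref{PoinDouHarn} does not apply to $v$ directly. The standard fix is to apply the mean value inequality to $u=P_s(e^{\psi}\phi)$ itself on the ball $B_x(\sqrt t)$ and then compare $e^{-\psi}$ with $e^{-\psi(x)}$ there using $|\nabla\psi|\le\xi$, at the cost of a factor $e^{\xi\sqrt t}\le e\cdot e^{\xi^2 t/4}$ that is absorbed into the optimization; alternatively one redoes the Moser iteration for the perturbed operator. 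Either fix is routine, and with it your argument closes; your derivation of \eqref{lowhe2} from \eqref{uppfu1} via doubling on the ball of radius $d(x,y)+\sqrt t$ is exactly the paper's Lemma~\ref{lecomp1}. One small correction of emphasis: the paper never passes between $dv$ and $d\mu$ via $e^{\pm A(R)}$; all the $e^{c(n)A(R)}$ factors enter through the weighted volume comparison and the Sobolev constant, which is cleaner than the measure-comparison bookkeeping you describe, though both lead to the same form of prefactor.
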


As pointed out by Munteanu-Wang \cite{[MuWa]}, only assuming $\mathrm{Ric}_f\geq 0$ may not be sufficient to derive $f$-heat kernel estimates by classical Li-Yau gradient estimate procedure \cite{[Li-Yau1]}. But we can derive a Gaussian upper bound using the De Giorgi-Nash-Moser theory and the weighted version of Davies's integral estimate \cite{[Davies]}.

\vspace{1em}

For Gaussian solitons, the $f$-heat kernel can be solved explicitly in closed forms.

\begin{example} $f$-heat kernel for steady Gaussian soliton.

Let $(\mathbb{R},\ g_0, e^{-f}dx)$ be a $1$-dimensional steady Gaussian soliton, where $g_0$ is the Euclidean metric and $f(x)=\pm x$. Then $\mathrm{Ric}_f=0$. The heat kernel of the operator $\Delta_f=\frac{d^2}{dx^2}\mp \frac{d}{dx}$ is given by
\[
H(x,y,t)=\frac{e^{\pm \frac{x+y}{2}}\cdot e^{-t/4}}{(4\pi t)^{1/2}}\times\exp\left(-\frac{|x-y|^2}{4t}\right).
\]
This $f$-heat kernel is solved using the separation of variables method, and it seems not in the literature, for the sake of completeness we include it in the appendix.
\end{example}

\begin{example}
Mehler heat kernel \cite{[Grig3]} for shrinking Gaussian soliton.

Let $(\mathbb{R},\ g_0, e^{-f}dx)$ be a $1$-dimensional shrinking Gaussian soliton, where $g_0$ is the Euclidean metric and $f(x)=x^2$. Then $\mathrm{Ric}_f=2$. The heat kernel of the operator $\Delta_f=\frac{d^2}{dx^2}-2x\frac{d}{dx}$ is given by
\[
H(x,y,t)=\frac{1}{(2\pi\sinh 2t)^{1/2}}\times\exp\left(\frac{2xye^{-2t}-(x^2+y^2)e^{-4t}}{1-e^{-4t}}+t\right).
\]
\end{example}

\begin{example}
Mehler heat kernel \cite{[Grig3]} for expanding Gaussian soliton.

Let $(\mathbb{R},\ g_0, e^{-f}dx)$ be a $1$-dimensional expanding Gaussian soliton, where $g_0$ is the Euclidean metric and $f(x)=-x^2$. Then $\mathrm{Ric}_f=-2$. The heat kernel of the operator $\Delta_f=\frac{d^2}{dx^2}+2x\frac{d}{dx}$ is given by
\[
H(x,y,t)=\frac{1}{(2\pi\sinh 2t)^{1/2}}\times\exp\left(\frac{2xye^{-2t}-(x^2+y^2)}{1-e^{-4t}}-t\right).
\]
\end{example}

\

As applications, we prove an $L_f^1$-Liouville theorem on complete smooth metric measure spaces with $\mathrm{Ric}_f\geq 0$ and $f$ to be of at most quadratic growth. We say $u\in L_f^p$, if $\int_M |u|^pe^{-f}dv<\infty$.

\begin{theorem}\label{Main2}
Let $(M,g,e^{-f}dv)$ be an $n$-dimensional complete noncompact smooth metric measure space with $\mathrm{Ric}_f\geq0$. Assume there exist nonnegative constants $a$ and $b$ such that
\[
|f|(x)\leq ar^2(x)+b\,\,\, {for}\,\, {all}\,\, x\in M,
\]
where $r(x)$ is the geodesic distance function to a fixed point $o\in M$. Then any nonnegative $L_f^1$-integrable $f$-subharmonic function must be identically constant. In particular, any $L_f^1$-integrable $f$-harmonic function must be identically constant.
\end{theorem}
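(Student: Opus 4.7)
The plan is to follow P.~Li's classical heat-kernel approach to the $L^1$-Liouville theorem, adapted to the weighted setting, with Theorem~\ref{Main1} as the essential input. The argument splits naturally into two phases: first, show that a nonnegative $L_f^1$ $f$-subharmonic function is automatically $f$-harmonic; then show that any nonnegative $L_f^1$ $f$-harmonic function must be constant.

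\textbf{Phase 1 (subharmonic to harmonic).} Let $u\ge 0$ be $L_f^1$ with $\Delta_fu\ge 0$, and define $P_tu(x):=\int_M H(x,y,t)\,u(y)\,e^{-f(y)}dv(y)$ via the minimal positive $f$-heat kernel. The goal is to establish the two inequalities $P_tu\ge u$ a.e.\ and $\int_M P_tu\,d\mu\le\int_M u\,d\mu$, which together force $P_tu=u$ a.e. The first inequality follows cleanly from self-adjointness: for any nonnegative $\phi\in C_0^\infty(M)$,
\begin{align*}
\int_M(P_tu-u)\phi\,d\mu
&=\int_M u\,(P_t\phi-\phi)\,d\mu\\
&=\int_0^t\int_M u\,\Delta_fP_s\phi\,d\mu\,ds
=\int_0^t\int_M \Delta_fu\cdot P_s\phi\,d\mu\,ds\ge 0,
\end{align*}
where the integration by parts is justified by the rapid decay of $P_s\phi$ and by $\Delta_fu\ge 0$ as a distribution. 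The second inequality comes from the submarkovianity $P_t1\le 1$ (immediate from the minimality of $H$ via exhaustion by Dirichlet subdomains) combined with Fubini and the symmetry $H(x,y,t)=H(y,x,t)$. Consequently $u$ agrees with the smooth function $P_tu$, so $u$ itself is smooth and $f$-harmonic.

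\textbf{Phase 2 ($L_f^1$ $f$-harmonic is constant).} This is the core content. Using $u=P_tu$ for every $t>0$, for any two points $x_1,x_2\in M$,
\[
u(x_1)-u(x_2)=\int_M\bigl(H(x_1,y,t)-H(x_2,y,t)\bigr)u(y)\,d\mu(y).
\]
I would split the integration into $B_o(R)$ and its complement. The tail is controlled by the $L_f^1$-integrability of $u$ together with the local Gaussian bound of Theorem~\ref{Main1}. On $B_o(R)$, I would combine Theorem~\ref{Main1} with an interior parabolic regularity bound on $H$ (via Caccioppoli and Moser iteration, using the quadratic growth of $f$ to control the error terms) to dominate $|H(x_1,y,t)-H(x_2,y,t)|$ by $d(x_1,x_2)$ times a factor whose integral against $u(y)\,d\mu(y)$ vanishes as $t\to\infty$. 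The delicate balance is to choose $R=R(t)$ so that the Gaussian factor $\exp(-d^2/((4+\epsilon)t))$ absorbs the prefactor $\exp(c_2A(R))\le\exp(Cc_2aR^2)$ produced by the quadratic growth of $|f|$; quadratic growth is precisely the threshold at which such an $R$ still exists. Sending $t\to\infty$ then forces $u(x_1)=u(x_2)$, so $u$ is constant.

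\textbf{Main obstacle.} The difficulty is entirely in Phase~2: Theorem~\ref{Main1} is only a local estimate, and its prefactor $e^{c_2A(R)}$ grows at essentially the same exponential-in-$R^2$ rate that the Gaussian factor must defeat. The admissible window for $R/\sqrt{t}$ is tight, and producing a kernel oscillation estimate of matching sharpness -- plausibly via a weighted adaptation of Davies' integral technique already used for Theorem~\ref{Main1} -- is where the work concentrates. Phase~1 is a routine weighted transcription of P.~Li's original heat-kernel argument.
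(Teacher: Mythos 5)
Your \emph{Phase 1} is essentially the paper's Theorem \ref{liouL1} (the identity $\int_M {\Delta_f}_y H\,u\,d\mu=\int_M H\,\Delta_f u\,d\mu$) recast in dual form, and the conclusion that $P_tu=u$, hence $u$ is $f$-harmonic, is correct. But calling the justification of the integration by parts ``routine'' understates matters: that justification is where almost all of the paper's work lies --- a mean value bound $\sup_{B_o(R)}u\leq Ce^{\alpha R^2}V_f^{-1}\|u\|_{L_f^1}$, a Caccioppoli bound for $\int|\nabla u|$ over annuli, the Gaussian bound for $H$ together with an eigenfunction-expansion bound for $\int(\Delta_fH)^2$, all balanced against one another only for $t$ small, with the extension to all $t$ by the semigroup property. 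Your $P_s\phi$ has exactly the same unbounded-support problem as $H(x,\cdot,s)$, so the duality formulation saves nothing here.

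The genuine gap is \emph{Phase 2}. You propose to let $t\to\infty$ in $u(x_1)-u(x_2)=\int_M\bigl(H(x_1,y,t)-H(x_2,y,t)\bigr)u\,d\mu$, but Theorem \ref{Main1} is local in time: it requires $t<R^2/4$ and carries the prefactor $e^{c_2A(R)}$ with $A(R)=\sup_{B_o(3R)}|f|\leq 9aR^2+b$. To reach time $t$ you must take $R\gtrsim\sqrt{t}$, so the prefactor is at least $e^{Ct}$, while the Gaussian factor $\exp(-d^2(x,y)/((4+\epsilon)t))$ tends to $1$ for fixed $x_1,x_2$ and $y$ in a fixed ball --- nothing is left to absorb $e^{Ct}$, and the tail over $M\setminus B_o(R)$ suffers the same fate. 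The ``tight window'' you mention is in fact empty as $t\to\infty$; quadratic growth of $f$ is the threshold for the \emph{small}-time estimates, not for a large-time one, and the paper never uses the kernel at large times. Instead, once $u$ is known to be $f$-harmonic, it applies P.~Li's truncation trick: $h:=\min\{u,a\}$ is a nonnegative $L_f^1$ $f$-superharmonic function satisfying $|\nabla h|\leq|\nabla u|$, so the same integration-by-parts identity applies and $P_th$ is nonincreasing in $t$; stochastic completeness ($\int_MH(x,y,t)\,d\mu(y)=1$, which follows from $V_f(B_o(R))\leq Ce^{cR^2}$ and Grigor'yan's volume criterion, Lemma \ref{stocha}) preserves $\int_MP_th\,d\mu$, forcing $P_th\equiv h$ and hence $\Delta_fh=0$; regularity and the strong maximum principle then show $\min\{u,a\}$ cannot be $f$-harmonic unless $u\leq a$ or $u\geq a$ everywhere, and since $a$ is arbitrary $u$ is constant. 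Your argument needs this (or some other large-time-free) device to close.
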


From \cite{[CaoZhou]} and \cite{[Hami]} any complete noncompact shrinking or steady gradient Ricci soliton satisfies the assumptions in Theorem \ref{Main2}. Hence
\begin{corollary}\label{Shrinker}
Let $(M,g,e^{-f}dv)$ be a complete noncompact gradient shrinking or steady Ricci soliton. Then any nonnegative $L_f^1$-integrable $f$-subharmonic function must be identically constant.
\end{corollary}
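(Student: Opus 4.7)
The plan is to verify the two hypotheses of Theorem \ref{Main2}---namely $\mathrm{Ric}_f\geq 0$ and at most quadratic growth of $|f|$---and then apply that theorem directly. Corollary \ref{Shrinker} is a purely ``combining'' result: all the real analytic work sits in Theorem \ref{Main2}, so the task is just to recall why shrinking and steady solitons fall within its scope.

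First I would dispense with the curvature hypothesis. By definition a gradient Ricci soliton satisfies $\mathrm{Ric}+\nabla^2 f=\lambda g$, so $\mathrm{Ric}_f=\lambda g$. In the shrinking case $\lambda>0$ and in the steady case $\lambda=0$, so in both cases $\mathrm{Ric}_f\geq 0$ holds as required.

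Next I would verify the quadratic growth of $|f|$ using the two references already cited in the excerpt. For a complete noncompact gradient shrinking Ricci soliton (normalized so that $\lambda=\tfrac12$), the Cao-Zhou estimate gives
\[
\tfrac14\bigl(r(x)-c_1\bigr)^2\leq f(x)\leq \tfrac14\bigl(r(x)+c_2\bigr)^2
\]
for constants $c_1,c_2\geq 0$ depending on the basepoint $o$, which immediately yields $|f|(x)\leq a\,r^2(x)+b$ for suitable $a,b\geq 0$. For a complete noncompact gradient steady Ricci soliton, Hamilton's identity $|\nabla f|^2+R=C$ together with $R\geq 0$ on a steady soliton gives $|\nabla f|\leq\sqrt{C}$, so $f$ grows at most linearly from $o$; linear growth is certainly dominated by a quadratic function, so the hypothesis of Theorem \ref{Main2} is again satisfied.

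With both hypotheses in place, Theorem \ref{Main2} applies verbatim and any nonnegative $L_f^1$-integrable $f$-subharmonic function on $(M,g,e^{-f}dv)$ must be identically constant, which is precisely the statement of the corollary. There is no substantive obstacle; the only thing to be careful about is making sure the normalization conventions for $\lambda$ used in the Cao-Zhou reference match the definition of gradient shrinking soliton used here, but this is a scaling that does not affect the qualitative ``at most quadratic'' growth estimate.
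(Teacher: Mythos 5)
Your proposal is correct and follows exactly the paper's (very brief) argument: the paper simply cites Cao--Zhou for the quadratic growth of $f$ on shrinkers and Hamilton's identity $R+|\nabla f|^2=\mathrm{const}$ (with $R\geq 0$) for the at-most-linear growth on steady solitons, and then invokes Theorem \ref{Main2}. Your write-up just makes these citations explicit, including the easy observation that $\mathrm{Ric}_f=\lambda g\geq 0$ in both cases.
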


\begin{remark}
Pigola, Rimoldi and Setti (see Corollary 23 in \cite{[PiRiSe]}) proved that on a complete gradient shrinking Ricci soliton, any locally lipschitz $f$-subharmonic function $u\in L_f^p$, $1<p<\infty$, is constant. Our result shows that this is true in the case $p=1$. Brighton \cite{[Bri]}, Cao-Zhou \cite{[CaoZhou]}, Munteanu-Sesum \cite{[MuSe]}, Munteanu-Wang \cite{[MuWa]}, Wei-Wylie \cite{[WW]} have proved several similar results.
\end{remark}

The growth condition of $f$ in Theorem \ref{Main2} is sharp as explained by the following simple example.
\begin{example}
Consider the $1$-dimensional smooth metric measure space $(\mathbb{R},\ g_0, e^{-f}dx)$, where $g_0$ is the Euclidean metric and $f(x)=x^{2+2\delta}$, $\delta=\frac{1}{2m+1}$ for $m\in\mathbb{N}$. By direct computation, $\mathrm{Ric}_f\geq0$. Let
\[
u(x):=\int_0^{|x|}e^{t^{2+2\delta}}dt.
\]
Then $u$ is $f$-harmonic. Moreover we claim $u\in L^1(\mu)$. Indeed, the integration by parts implies the identity
\[
\int_1^x e^{t^{2+2\delta}}dt=\frac{1}{2+2\delta}\left[
\frac{e^{x^{2+2\delta}}}{x^{1+2\delta}}-e+(1+2\delta)\int_1^x\frac{e^{t^{2+2\delta}}}{t^{2+2\delta}}dt\right].
\]
Then by L'Hospital rule, when $x$ is large enough,
\[
\int_1^x e^{t^{2+2\delta}}dt=\frac{1}{2+2\delta}\frac{e^{x^{2+2\delta}}}{x^{1+2\delta}}(1+o(1)).
\]
Therefore
\[
\int_{\mathbb{R}}ue^{-f} dx=\int_{-\infty}^{\infty}\Big(\int_0^{|x|} e^{t^{2+2\delta}}dt\Big)e^{-x^{2+2\delta}}dx <\infty,
\]
i.e., $u\in L_f^1$, but $u\not\in L_f^p$ for any $p>1$. On the other hand, if $\delta=0$ then $u\not\in L_f^1$.
\end{example}

By Theorem \ref{Main2}, we prove a uniqueness theorem for $L_f^1$-solutions of the $f$-heat equation, which generalizes the classical result of P. Li \cite{[Li0]}.
\begin{theorem}\label{Main3}
Let $(M,g,e^{-f}dv)$ be an $n$-dimensional complete noncompact smooth metric measure space with $\mathrm{Ric}_f\geq0$. Assume there exist nonnegative constants $a$ and $b$ such that
\[
|f|(x)\leq ar^2(x)+b\,\,\, {for}\,\, {all}\,\, x\in M,
\]
where $r(x)$ is the distance function to a fixed point $o\in M$. If $u(x,t)$ is a nonnegative function defined on $M\times[0,+\infty)$ satisfying
\[
(\partial_t-\Delta_f)u(x,t)\leq0, \quad\int_Mu(x,t)e^{-f}dv<+\infty
\]
for all $t>0$, and
\[
\lim_{t\to 0}\int_Mu(x,t)e^{-f}dv=0,
\]
then $u(x,t)\equiv0$ for all $x\in M$ and $t\in(0,+\infty)$. In particular, any $L_f^1$-solution of the $f$-heat equation is uniquely determined by its initial data in $L_f^1$.
\end{theorem}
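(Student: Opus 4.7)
The plan is to reduce Theorem \ref{Main3} to the $L_f^1$-Liouville theorem (Theorem \ref{Main2}), mimicking the strategy of P.~Li \cite{[Li0]} in the unweighted setting. The idea is to manufacture, from the nonnegative subsolution $u$, a nonnegative $L_f^1$-integrable $f$-subharmonic function; Theorem \ref{Main2} then forces it to be constant, which in turn forces $u$ to vanish.

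Fix $T>0$ and set
\[
w_T(x) := \int_0^T u(x,t)\,dt.
\]
Formally integrating $\partial_t u\le \Delta_f u$ in $t$ gives $\Delta_f w_T \ge u(\cdot,T)-\lim_{s\to 0^+}u(\cdot,s)$. I would justify this distributionally by pairing with $\phi\in C_0^\infty(M)$, applying Fubini and integrating by parts in space (licit since $\phi$ has compact support), and passing $s\to 0^+$: the time integral converges monotonically to the one defining $w_T$, while the boundary term is killed by the hypothesis $\int_M u(\cdot,s)\,d\mu\to 0$ against bounded $\phi$. Parabolic regularity of the subsolution $u$ (via De~Giorgi--Nash--Moser, as used in Theorem \ref{Main1}) then upgrades this to $\Delta_f w_T \ge u(\cdot,T)\ge 0$ in the classical sense, so $w_T$ is nonnegative and $f$-subharmonic.

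The delicate step is the integrability $w_T\in L_f^1$, equivalent to $\int_0^T h(t)\,dt<\infty$ with $h(t):=\int_M u(\cdot,t)\,d\mu$. I would use a radial cutoff $\phi_R$ with $\phi_R\equiv 1$ on $B_o(R)$, $\mathrm{supp}\,\phi_R\subset B_o(2R)$, $|\nabla\phi_R|=O(R^{-1})$, $|\Delta\phi_R|=O(R^{-2})$, test the subsolution inequality against $\phi_R^2$, and control $|\Delta_f\phi_R^2|$ uniformly in $R$. The Laplacian part contributes $O(R^{-2})$; the drift $|\nabla f\cdot\nabla\phi_R^2|$ is handled using a linear bound $|\nabla f|=O(r)$ that one extracts from $|f|\le ar^2+b$ under $\mathrm{Ric}_f\ge 0$ via a Cheng--Yau-type interior gradient estimate applied to $f$ itself. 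Sending $R\to\infty$ and exploiting the tail decay of $u(\cdot,t)\in L_f^1$ at each fixed $t$, together with Fatou in the time variable, should yield $h\in L^\infty([0,T])$, hence $w_T\in L_f^1$.

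With $w_T\ge 0$, $w_T\in L_f^1$, and $\Delta_f w_T\ge 0$ in hand, Theorem \ref{Main2} yields $w_T\equiv \mathrm{const}$, so $\Delta_f w_T\equiv 0$; combined with $\Delta_f w_T\ge u(\cdot,T)\ge 0$ this forces $u(\cdot,T)\equiv 0$, and since $T>0$ was arbitrary, $u\equiv 0$ on $M\times(0,\infty)$. The main obstacle I foresee is the uniform control on $|\Delta_f\phi_R^2|$ in the integrability step: extracting a linear gradient bound on $f$ from the quadratic bound on $|f|$ under $\mathrm{Ric}_f\ge 0$ is the technical heart of the argument, and is precisely the reason the quadratic growth hypothesis on $f$ is indispensable here, as it was in the proof of Theorem \ref{Main2}.
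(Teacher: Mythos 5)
Your overall strategy --- integrate in time and feed the result to Theorem \ref{Main2} --- is the right one, and your final deduction ($w_T$ constant $\Rightarrow$ $\Delta_f w_T=0\ge u(\cdot,T)$ distributionally $\Rightarrow$ $u(\cdot,T)\equiv 0$) is sound. But the step you yourself flag as delicate, the $L_f^1$-integrability of $w_T=\int_0^T u(\cdot,t)\,dt$, is a genuine gap, and the mechanism you propose for it does not work. A Cheng--Yau-type gradient estimate applies to $f$-harmonic functions, not to the potential $f$ itself: from $\mathrm{Ric}_f=\mathrm{Ric}+\nabla^2 f\ge 0$ and $|f|\le ar^2+b$ one gets no pointwise control on $|\nabla f|$ (the hypothesis only bounds $\nabla^2 f$ from below by $-\mathrm{Ric}$, which is itself uncontrolled), so the drift term $\nabla f\cdot\nabla\phi_R^2$ in your cutoff computation cannot be bounded this way. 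Worse, if your cutoff/Gronwall argument did close, it would show that $t\mapsto\int_M u(\cdot,t)\,d\mu$ is essentially non-increasing, which together with the hypothesis $\int_M u(\cdot,t)\,d\mu\to 0$ would prove the theorem outright, with no Liouville theorem and no heat kernel estimate at all --- a strong sign that the uniformity you need (the tail decay of $u(\cdot,t)$ is not uniform in $t$) cannot be obtained so cheaply; recall that $L^1$-uniqueness genuinely fails on stochastically incomplete manifolds.

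The paper avoids all of this by a different bookkeeping, following Li. Fix $\epsilon>0$ and apply Theorem \ref{Main2} not to $\int_0^T u\,dt$ but to $h=\int_0^T F_\epsilon(\cdot,t)\,dt$, where $F_\epsilon$ is the positive part of $u(\cdot,t+\epsilon)-e^{t\Delta_f}u_\epsilon$ with $u_\epsilon=u(\cdot,\epsilon)$: this is a nonnegative subsolution with vanishing initial data, and its time integral lies in $L_f^1$ for free, since $\int_M u(\cdot,t+\epsilon)\,d\mu<\infty$ by hypothesis and $e^{t\Delta_f}$ is an $L_f^1$-contraction (stochastic completeness, Lemma \ref{stocha}); the $\epsilon$-shift removes any issue at $t=0$. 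The Liouville theorem then yields only the comparison $u(\cdot,t+\epsilon)\le e^{t\Delta_f}u_\epsilon$, and a second, separate step is required: the Gaussian upper bound of Theorem \ref{Main1}, with $e^{c A(R)}\le e^{c'R^2}$ absorbed by the factor $\exp(-d^2(x,y)/(5t))$ for $t$ small, shows that $e^{t\Delta_f}u_\epsilon\to 0$ as $\epsilon\to 0$ using $\int_M u(\cdot,\epsilon)\,d\mu\to 0$. This last step is where the heat kernel estimate and the quadratic growth of $f$ actually enter the proof of Theorem \ref{Main3}; your proposal omits it and instead places the full burden on an integrability claim that is not established.
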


The rest of the paper is organized as follows. In Section \ref{sec2a}, we provide a relative $f$-volume comparison theorem for smooth metric measure spaces with nonnegative Bakry-\'{E}mery Ricci curvature. Using the comparison theorem, we derive a local $f$-volume doubling property, a local $f$-Neumann Poincar\'e inequality, a local Sobolev inequality, and a $f$-mean value inequality. In Section \ref{sec3}, we prove local Gaussian upper bounds of the $f$-heat kernel by applying the mean value inequality. In Sections \ref{sec4} and \ref{sec5}, we prove the $L_f^1$-Liouville theorem for $f$-subharmonic functions and the $L_f^1$-uniqueness property for nonnegative solutions of the $f$-heat equation following the argument of Li in \cite{[Li0]}. In appendix, we compute the $f$-heat kernel of $1$-dimensional steady Gaussian soliton.

\vspace{0.5em}

\textbf{Acknowledgements}. The authors thank Professors Xiaodong Cao and Zhiqin Lu for helpful discussions. The second author thanks Professors Xianzhe Dai and Guofang Wei for helpful discussions, constant encouragement and support. The first author is partially supported by NSFC (11101267, 11271132) and the China Scholarship Council (08310431). The second author is partially supported by an AMS-Simons travel grant.


\section{Poincar\'e, Sobolev and mean value inequalities}
\label{sec2a}
Let $\Delta_f=\Delta-\nabla f\cdot\nabla$ be the $f$-Laplacian on a complete smooth metric measure space $(M,g,e^{-f}dv)$. Throughout this section, we will assume
\[
\mathrm{Ric}_f\geq 0.
\]
For a fixed point $o\in M$ and $R>0$, we denote
\[
A(R)=\sup_{x\in B_o(3R)}|f(x)|.
\]
We often write $A$ for short. First we have the relative $f$-volume comparison theorem proved by Wei and Wylie \cite{[WW]} and Munteanu-Wang \cite{[MuWa]}.

\begin{lemma}\label{comp}
Let $(M,g,e^{-f}dv)$ be an $n$-dimensional complete noncompact smooth metric measure space. If $\mathrm{Ric}_f\geq0$, then for any $x\in B_o(R)$,
\begin{equation}\label{volcomp}
\frac{V_f(B_x(R_1,R_2))}{V_f(B_x(r_1,r_2))}\leq e^{4A}\frac{R^n_{2}-R^n_{1}}{r^n_{2}-r^n_{1}},
\end{equation}
for any $0<r_1<r_2,\ 0<R_1<R_2<R$, $r_1\leq R_1,\ r_2\leq R_2<R$, where $B_x(R_1,R_2):=B_x(R_2)\backslash B_x(R_1)$.
\end{lemma}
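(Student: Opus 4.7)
The plan is to adapt Wei-Wylie's argument. Fix $x\in B_o(R)$ and work in polar normal coordinates at $x$, writing $dv=\mathcal{A}(r,\theta)\,dr\,d\theta$ along each minimizing geodesic $\gamma_\theta$ (extended by zero past the cut locus). Define the weighted area element $\mathcal{A}_f(r,\theta):=e^{-f(\gamma_\theta(r))}\mathcal{A}(r,\theta)$, whose logarithmic derivative is the weighted mean curvature $m_f(r)=(\log\mathcal{A}_f)'(r,\theta)=m(r)-\partial_r f$, where $m$ is the ordinary mean curvature of geodesic spheres. Bochner's formula together with Cauchy-Schwarz yields the standard Riccati inequality $m'+\tfrac{m^2}{n-1}\leq-\mathrm{Ric}(\partial_r,\partial_r)$, which combined with $\mathrm{Ric}=\mathrm{Ric}_f-\nabla^2 f\geq -\nabla^2 f$ becomes a Riccati-type inequality for $m_f$ in which the uncontrolled drift $\partial_r f$ appears only through the combination $m=m_f+\partial_r f$.

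First I would integrate this weighted Riccati inequality against an appropriate test factor so that second derivatives of $f$ are converted, by one integration by parts, into boundary $f$-differences rather than pointwise gradient terms. Because $x\in B_o(R)$ and $r<R$ forces $\gamma_\theta([0,r])\subset B_o(2R)\subset B_o(3R)$, the uniform bound $|f|\leq A$ on $B_o(3R)$ controls every such difference by $2A$. The resulting pointwise comparison takes the form
\[
\frac{\mathcal{A}_f(r,\theta)}{r^{n-1}}\leq e^{2A}\,\frac{\mathcal{A}_f(s,\theta)}{s^{n-1}}\qquad(0<s\leq r<R),
\]
i.e.\ $r\mapsto\mathcal{A}_f(r,\theta)/r^{n-1}$ is $e^{2A}$-almost nonincreasing. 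Integrating over $\theta\in S^{n-1}$ gives the same almost-monotonicity for the weighted sphere area $\mathcal{A}_f(r):=\int_{S^{n-1}}\mathcal{A}_f(r,\theta)\,d\theta$.

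The annular inequality \eqref{volcomp} then follows from a standard Gromov-type integration lemma: if $\phi(r)/r^{n-1}$ is $c$-almost nonincreasing on $(0,R)$, then for any $0<r_1<r_2$, $0<R_1<R_2<R$ with $r_1\leq R_1$ and $r_2\leq R_2$,
\[
\frac{\int_{R_1}^{R_2}\phi(r)\,dr}{R_2^n-R_1^n}\leq c^2\,\frac{\int_{r_1}^{r_2}\phi(r)\,dr}{r_2^n-r_1^n}.
\]
Applying this with $\phi=\mathcal{A}_f$ and $c=e^{2A}$, together with $V_f(B_x(a,b))=\int_a^b\mathcal{A}_f(r)\,dr$, yields exactly the factor $e^{4A}$ in \eqref{volcomp}.

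The main obstacle is the first step: the absence of any sign condition on $\partial_r f$ prevents a direct Bishop-type integration of the Riccati inequality. The Wei-Wylie insight is precisely to trade pointwise estimates on $\nabla f$ for integrated estimates on $f$ by choosing the correct Riccati comparison function (morally, modulating the Euclidean model area $r^{n-1}$ by $e^{-f}$), thereby converting uncontrolled gradient terms into boundary $f$-differences that the uniform bound $|f|\leq A$ tames. The cut locus is handled in the usual way, either by working with smooth support barriers or by interpreting all mean-curvature inequalities in the distributional sense.
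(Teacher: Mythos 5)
Your proposal follows essentially the same route as the paper's proof: Wei--Wylie's $f$-mean curvature comparison (obtained by integrating the Riccati inequality by parts so that only boundary values of $f$ survive), almost-monotonicity of $r^{1-n}\mathcal{A}_f(r,\theta)$, and a Gromov/Zhu-type integration lemma. One bookkeeping remark: the correct pointwise constant is $e^{4A}$ rather than $e^{2A}$ (the exponent $\frac{2}{s}\int_0^s f\,dt-\frac{2}{r}\int_0^r f\,dt$ involves two averages of $f$, each only bounded by $2A$ in absolute value), while the integration lemma for a $c$-almost nonincreasing ratio in fact costs only a factor $c$ rather than $c^2$ (replace $\phi$ by $\tilde\phi(r)=r^{n-1}\inf_{s\le r}\phi(s)s^{1-n}$, which is exactly monotone and satisfies $\tilde\phi\le\phi\le c\tilde\phi$); these two slips cancel and you still land on $e^{4A}$, which the paper obtains by keeping the exactly monotone quantity $r^{1-n}\mathcal{A}_f(r,\theta)\exp(\frac{2}{r}\int_0^r f\,dt)$ and only afterwards bounding the weight between $e^{-2A}$ and $e^{2A}$.
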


\begin{proof}[Proof of Lemma \ref{comp}]
Wei and Wylie (see (3.19) in \cite{[WW]}) proved the following $f$-mean curvature comparison theorem. Recall that the weighted mean curvature $m_f(r)$ is defined as $m_f(r)=m(r)-\nabla f\cdot\nabla r=\Delta_f\ r$. For any $x\in B_o(R)\subset M$, if $\mathrm{Ric}_f\geq 0$, then
\[
m_f(r)\leq\frac{n-1}{r}+\frac{2}{r^2}\int^r_0f(t)dt-\frac{2}{r}f(r),
\]
along any minimal geodesic segment from $x$.

In geodesic polar coordinates, the volume element is written as $dv=\mathcal{A}(r,\theta)dr\wedge d\theta_{n-1}$, where $d\theta_{n-1}$ is the standard volume element of the unit sphere $S^{n-1}$. Let $\mathcal{A}_f(r,\theta)=e^{-f}\mathcal{A}(r,\theta)$. By the first variation of the area,
\[
\frac{\mathcal{A'}}{\mathcal{A}}(r,\theta)=(\ln(\mathcal{A}(r,\theta)))'=m(r,\theta).
\]
Therefore
\[
\frac{\mathcal{A'}_f}{\mathcal{A}_f}(r,\theta)=(\ln(\mathcal{A}_f(r,\theta)))'=m_f(r,\theta).
\]
For $0<r_1<r_2<R$, integrating this from $r_1$ to $r_2$ we get
\begin{equation*}
\begin{aligned}
\frac{\mathcal{A}_f(r_2,\theta)}{\mathcal{A}_f(r_1,\theta)}&=\exp\left(\int_{r_1}^{r_2}m_f(s,\theta)ds\right)\\
&\leq\left(\frac{r_2}{r_1}\right)^{n-1}\exp\left[2\int_{r_1}^{r_2}\frac{1}{r^2}\left(\int^r_0f(t)dt\right)dr
-2\int_{r_1}^{r_2}\frac{f(r)}{r}dr\right].
\end{aligned}
\end{equation*}
Since
\[
\int_{r_1}^{r_2}\frac{1}{r^2}\left(\int^r_0f(t)dt\right)dr
=-\frac{1}{r}\left(\int^r_0f(t)dt\right)\Big|_{r_1}^{r_2}+\int_{r_1}^{r_2}\frac{f(r)}{r}dr,
\]
then we have
\[
\frac{\mathcal{A}_f(r_2,\theta)\cdot\exp(\frac{2}{r_2}\int^{r_2}_0f(t)dt)}{\mathcal{A}_f(r_1,\theta)
\cdot\exp(\frac{2}{r_1}\int^{r_1}_0f(t)dt)}\leq\left(\frac{r_2}{r_1}\right)^{n-1}
\]
for $0<r_1<r_2<R$. That is
$
r^{1-n}\mathcal{A}_f(r,\theta)\exp(\frac{2}{r}\int^{r}_0f(t)dt)
$
is nonincreasing in $r$. Applying Lemma 3.2 in \cite{[Zhu]}, we get
\[
\frac{\int^{R_2}_{R_1}\mathcal{A}_f(r,\theta)\cdot\exp(\frac{2}{t}\int^{t}_0f(\tau)d\tau)dt}
{\int^{r_2}_{r_1}\mathcal{A}_f(r,\theta)\cdot\exp(\frac{2}{t}\int^{t}_0f(\tau)d\tau)dt}
\leq\frac{\int^{R_2}_{R_1}t^{n-1}dt}{\int^{r_2}_{r_1}t^{n-1}dt}
\]
for $0<r_1<r_2$, $0<R_1<R_2$, $r_1\leq R_1$ and $r_2\leq R_2<R$. Integrating along the sphere direction gives
\[
\frac{V_f(B_x(R_1,R_2))}{V_f(B_x(r_1,r_2))}\leq
e^{4A}\frac{R^n_{2}-R^n_{1}}{r^n_{2}-r^n_{1}},
\]
for any $0<r_1<r_2,\ 0<R_1<R_2<R$, $r_1\leq R_1,\ r_2\leq R_2<R$, where $B_x(R_1,R_2)=B_x(R_2)\backslash B_x(R_1)$. 
\end{proof}

From \eqref{volcomp}, letting $r_1=R_1=0$, $r_2=r$ and $R_2=2r$, we get
\begin{equation}\label{voldop}
V_f(B_x(2r))\leq 2^ne^{4A}\cdot V_f(B_x(r))
\end{equation}
for any $0<r<R/2$. This inequality implies that the local $f$-volume doubling property holds. This property will play a crucial role in our paper. We say that a complete smooth metric measure space $(M,g,e^{-f}dv)$ satisfies the local $f$-volume doubling property if for any $0<R<\infty$, there exists a constant $C(R)$ such that
\[
V_f(B_x(2r))\leq C(R)\cdot V_f(B_x(r))
\]
for any $0<r<R$ and $x\in M$. Note that when the above inequality holds with $R=\infty$, then it is called the global $f$-volume doubling property.\\

From Lemma \ref{comp}, we have

\begin{lemma}\label{lecomp1}
Let $(M,g,e^{-f}dv)$ be an $n$-dimensional complete noncompact smooth metric measure space. If $\mathrm{Ric}_f\geq 0$, then
\[
\frac{V_f(B_x(s))}{V_f(B_y(r))}\leq  4^ne^{8A}\left(\frac sr\right)^\kappa,
\]
where $\kappa=\log_2(2^ne^{4A})$, for any $0<r<s<R/4$ and all $x\in B_o(s)$ and $y\in B_x(s)$. Moreover, we have
\[
V_f(B_x(r))\leq e^{4A}\left(\frac{d(x,y)}{r}+1\right)^n V_f(B_y(r))
\]
for any $x,y\in B_o(\frac 14R)$ and $0<r<R/2$.
\end{lemma}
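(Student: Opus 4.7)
The plan is to deduce both inequalities directly from Lemma \ref{comp} (together with its doubling corollary \eqref{voldop}), using only elementary ball-inclusion facts and an iteration argument. Nothing new in the geometry is required; the lemma is essentially a repackaging of the comparison theorem into forms convenient for later heat kernel estimates.

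For the first inequality, the key observation is that since $y \in B_x(s)$, the triangle inequality yields $B_x(s) \subset B_y(2s)$, so it suffices to control $V_f(B_y(2s))/V_f(B_y(r))$. I would iterate the doubling estimate \eqref{voldop} at the center $y$: choosing $k := \lceil \log_2(2s/r) \rceil$ and applying doubling along the radii $r, 2r, \dots, 2^k r$ gives
\[
V_f(B_y(2s)) \leq V_f(B_y(2^k r)) \leq (2^n e^{4A})^k\, V_f(B_y(r)).
\]
Writing $\kappa = \log_2(2^n e^{4A})$ and using $k \leq \log_2(2s/r) + 1$, the prefactor becomes $2^{k\kappa} \leq 2^{2\kappa}(s/r)^\kappa = 4^n e^{8A}(s/r)^\kappa$, which is the stated bound. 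Along the way one checks that every intermediate radius stays below $R/2$ (since $2^k r \leq 4s < R$) and that the centers $y$ lie in $B_o(R)$ (which follows from $y \in B_x(s) \subset B_o(2s) \subset B_o(R/2)$), so that \eqref{voldop} is applicable at each step.

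For the second inequality, I would apply Lemma \ref{comp} directly. The triangle inequality gives $B_x(r) \subset B_y(r + d(x,y))$, so $V_f(B_x(r)) \leq V_f(B_y(r+d(x,y)))$. Taking the center to be $y$ and setting $r_1 = R_1 = 0$, $r_2 = r$, $R_2 = r + d(x,y)$ in \eqref{volcomp} (passing to the $r_1 \to 0$ limit, exactly as already done in deriving \eqref{voldop}) yields
\[
V_f(B_y(r+d(x,y))) \leq e^{4A}\Big(\frac{r+d(x,y)}{r}\Big)^n V_f(B_y(r)) = e^{4A}\Big(\frac{d(x,y)}{r}+1\Big)^n V_f(B_y(r)).
\]
Admissibility is quick to verify: $x, y \in B_o(R/4)$ gives $d(x,y) < R/2$, so $R_2 = r + d(x,y) < R$, and the center $y$ lies in $B_o(R)$ as required.

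I do not expect a real obstacle. The only subtleties are bookkeeping: one must pick the integer $k$ carefully so that the iteration lands on the cleaner exponent $\kappa$ rather than on a looser $O(\log(s/r))$ factor, and one must confirm the radius and center constraints needed at each invocation of Lemma \ref{comp}. Once these routine checks are made, both inequalities follow mechanically.
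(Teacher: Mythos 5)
Your proof is correct and follows essentially the same route as the paper: iterated doubling from Lemma \ref{comp} for the first inequality (same choice of $k$, same bookkeeping producing the factor $4^ne^{8A}(s/r)^{\kappa}$), and a direct application of the relative volume comparison for the second. The only cosmetic difference is that for the second inequality you compare $B_x(r)$ with the full ball $B_y(d(x,y)+r)$ rather than with the annulus $B_y(d(x,y)-r,\,d(x,y)+r)$ as the paper does; both give the identical bound, and your choice even sidesteps the degenerate case $d(x,y)<r$ where the paper's inner radius would be negative.
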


\begin{proof}
Choose a real number $k$ such that $2^k<s/r\leq 2^{k+1}$. Since $y\in B_x(s)$,
\[
B_x(s)\subset B_y(2s)\subset B_y(2^{k+2}r),
\]
and $V_f(B_x(s))\leq V_f(B_y(2^{k+2}r))$. Moreover, the assumption $\mathrm{Ric}_f\geq 0$ implies the local $f$-volume doubling property \eqref{voldop}. So we have
\[
V_f(B_x(s))\leq(2^ne^{4A})^{k+2} V_f(B_y(r))\leq (2^ne^{4A})^2(s/r)^\kappa V_f(B_y(r)),
\]
where $\kappa=\log_2(2^ne^{4A})$. This proves the first part of the lemma.

For the second part, letting $r_1=0$, $r_2=r$, $R_1=d(x,y)-r$ and $R_2=d(x,y)+r$ in Lemma \ref{comp}, we have
\[
\frac{V_f(B_x(d(x,y)+r))-V_f(B_x(d(x,y)-r))}{V_f(B_x(r))}\leq e^{4A}\left(\frac{d(x,y)}{r}+1\right)^n
\]
for any $x,y\in B_o(\frac 14R)$ and $0<r<R/2$. Therefore we get
\begin{equation*}
\begin{aligned}
V_f(B_x(r))&\leq V_f(B_y(d(x,y)+r))-V_f(B_y(d(x,y)-r))\\
&\leq e^{4A}\left(\frac{d(x,y)}{r}+1\right)^n V_f(B_y(r))
\end{aligned}
\end{equation*}
for any $x,y\in B_o(\frac 14R)$ and $0<r<R/2$. 
\end{proof}

By Lemma \ref{comp}, following Buser's proof \cite{[Bus]} or Saloff-Coste's alternate proof (Theorem 5.6.5  in \cite{[Saloff2]}), we get a local Neumann Poincar\'e inequality on smooth metric measure spaces, see also Munteanu and Wang (see Lemma 3.1 in \cite{[MuWa]}).

\begin{lemma}\label{NeuPoin}
Let $(M,g,e^{-f}dv)$ be an $n$-dimensional complete noncompact smooth metric measure space with $\mathrm{Ric}_f\geq 0$. 
Then for any $x\in B_o(R)$,
\begin{equation}\label{Nepoinineq}
\int_{B_x(r)}|\varphi-\varphi_{B_x(r)}|^2e^{-f}dv\leq c_1e^{c_2A}\cdot r^2\int_{B_x(r)}|\nabla \varphi|^2e^{-f}dv
\end{equation}
for all $0<r<R$ and $\varphi\in C^\infty(B_x(r))$, where $\varphi_{B_x(r)}=V_f^{-1}(B_x(r))\int_{B_x(r)}\varphi e^{-f}dv$.
The constants $c_1$ and $c_2$ depend only on $n$.
\end{lemma}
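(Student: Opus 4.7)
\textbf{Proof proposal for Lemma \ref{NeuPoin}.}

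The plan is to follow Buser's route, adapted to the weighted setting, in two stages: first establish a weak (enlarged-ball) weighted Poincaré inequality via a weighted segment inequality, and then upgrade it to the strong inequality on the same ball by a Whitney-covering argument of Jerison/Saloff-Coste. Both stages will depend only on the local $f$-volume doubling property \eqref{voldop} and the relative volume comparison \eqref{volcomp} of Lemma \ref{comp}, so the resulting constants will take the form $c_1 e^{c_2 A}$ as required.

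First I would prove a weighted segment inequality: for every nonnegative measurable $g$ on $B_x(2r)$ (with $x\in B_o(R)$, $0<r<R$),
\begin{equation*}
\int_{B_x(r)}\!\!\int_{B_x(r)}\!\Bigl(\int_{\gamma_{y,z}}\! g\,ds\Bigr)\,d\mu(y)d\mu(z)\;\leq\; C(n)\,e^{C(n)A}\, r\, V_f(B_x(r))\int_{B_x(2r)} g\,d\mu ,
\end{equation*}
where $\gamma_{y,z}$ is a minimal geodesic from $y$ to $z$. The proof runs exactly as in the unweighted case (Cheeger--Colding--Buser): one parametrizes the pair $(y,z)$ via $(w,\theta,\rho,s)$, where $w=\gamma(s)$ and $(\theta,\rho)$ are polar coordinates of $y$ from $w$, and then the Jacobian is $\mathcal A(\rho,\theta)/\rho^{n-1}$ times an analogous factor for $z$. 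Multiplying through by $e^{-f}$, the $f$-mean-curvature inequality $m_f(r)\le\frac{n-1}{r}+\frac{2}{r^2}\int_0^rf-\frac{2}{r}f(r)$ used in the proof of Lemma \ref{comp} shows that the weighted Jacobian ratio $\mathcal A_f(\rho,\theta)\rho^{1-n}\exp(\tfrac{2}{\rho}\!\int_0^\rho f)$ is monotone, hence the weighted Jacobians are controlled by $e^{C(n)A}$ times their flat counterparts. Integrating out and applying $f$-volume doubling yields the displayed inequality.

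Next, applying the segment inequality with $g=|\nabla\varphi|$ and using $|\varphi(y)-\varphi(z)|\le\int_{\gamma_{y,z}}|\nabla\varphi|\,ds$, followed by Cauchy--Schwarz in the usual way, I obtain the weak Poincaré inequality
\begin{equation*}
\int_{B_x(r)}|\varphi-\varphi_{B_x(r)}|^2\,e^{-f}dv\;\leq\; c_1' e^{c_2'A}\, r^2\int_{B_x(2r)}|\nabla\varphi|^2\,e^{-f}dv .
\end{equation*}
To pass from $B_x(2r)$ to $B_x(r)$ on the right, I apply Jerison's lemma as presented in Theorem 5.3.4/5.6.5 of Saloff-Coste \cite{[Saloff2]}: one covers $B_x(r)$ by a Whitney-type family of balls $\{B_i=B_{x_i}(\rho_i)\}$ with $\rho_i\simeq \operatorname{dist}(x_i,\partial B_x(2r))$, applies the weak inequality on each (enlarged) $B_i$, and telescopes the mean values $\varphi_{B_i}$ along a chain connecting any $B_i$ to a fixed central ball. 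The combinatorial input for the telescoping is exactly the local $f$-doubling property \eqref{voldop} together with Lemma \ref{lecomp1}, and the entire argument preserves the factor $e^{CA}$.

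The main obstacle is the first stage: making sure the weighted segment inequality holds with a constant depending on $f$ only through $A=\sup_{B_o(3R)}|f|$. This is where one must use the exact Wei--Wylie form of the $f$-mean-curvature comparison rather than a naive $\mathrm{Ric}\ge-\nabla^2 f$ substitution; otherwise one picks up a $\nabla f$-dependent error that cannot be reabsorbed into $A$. Once that step is secured, the Jerison/Saloff-Coste chaining argument is essentially bookkeeping and produces the stated constants $c_1(n),c_2(n)$.
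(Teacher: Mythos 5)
Your proposal is correct and follows exactly the route the paper itself invokes: the paper gives no proof of Lemma \ref{NeuPoin} beyond citing Buser's argument, Saloff-Coste's Theorem 5.6.5, and Munteanu--Wang's Lemma 3.1, all of which proceed by the weak (enlarged-ball) Poincar\'e inequality from a segment/Jacobian comparison plus Jerison's Whitney-chaining upgrade, with the weighted Jacobian controlled via the Wei--Wylie $f$-mean-curvature comparison so that the constant depends on $f$ only through $A$. Your reconstruction supplies the details the paper omits but is the same approach.
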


\begin{remark}
When $f$ is constant, this was classical result of Saloff-Coste (see (6) in \cite{[Saloff1]} or Theorem 5.6.5 in \cite{[Saloff2]}).
\end{remark}

Combining Lemma \ref{comp}, Lemma \ref{lecomp1}, Lemma \ref{NeuPoin} and the argument in \cite{[Saloff]}, we obtain a local Sobolev inequality.

\begin{lemma}\label{NeuSob}
Let $(M,g,e^{-f}dv)$ be an $n$-dimensional complete noncompact smooth metric measure space with $\mathrm{Ric}_f\geq0$. Then there exist constants $p>2$, $c_3$ and $c_4$, all depending only on $n$ such that
\begin{equation}\label{loSob}
\left(\int_{B_o(r)}|\varphi|^{\frac{2p}{p-2}}e^{-f}dv\right)^{\frac{p-2}{p}}\leq
\frac{c_3e^{c_4A}\cdot r^2}{V_f(B_o(r))^{\frac 2p}}\int_{B_o(r)}(|\nabla \varphi|^2+r^{-2}|\varphi|^2)e^{-f}dv
\end{equation}
for any $x\in M$ such that $0<r(x)<R$ and $\varphi\in C^\infty(B_o(r))$.
\end{lemma}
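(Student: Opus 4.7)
The plan is to follow Saloff-Coste's scheme \cite{[Saloff]}: the local $L^2$-Sobolev inequality is a formal consequence of the local volume doubling property combined with the local $L^2$-Neumann Poincar\'e inequality. Both ingredients have been set up for the weighted measure $e^{-f}dv$ earlier in this section, namely the doubling estimate \eqref{voldop}, its quantitative refinement in Lemma \ref{lecomp1}, and the Poincar\'e inequality Lemma \ref{NeuPoin}. Since Saloff-Coste's argument uses only metric and measure-theoretic properties of the ambient space, it transfers verbatim to the space of homogeneous type $(B_o(r),d,e^{-f}dv)$.

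Concretely, for $\varphi\in C^\infty(B_o(r))$ I would first represent $\varphi(x)-\varphi_{B_o(r)}$ as a telescoping sum of averages over a dyadically shrinking sequence of balls approaching $x$ (or, more carefully, along a Whitney-type chain), bound each increment by \eqref{Nepoinineq}, and use \eqref{voldop} to sum the resulting geometric series. This yields a pointwise estimate
\[
|\varphi(x)-\varphi_{B_o(r)}|\leq C\,e^{cA}\,r\cdot\bigl(M(|\nabla\varphi|^2)(x)\bigr)^{1/2},
\]
where $M$ is the weighted Hardy-Littlewood maximal operator on $(B_o(r),e^{-f}dv)$. The $L^s$-boundedness of $M$ for $s>1$, combined with a Marcinkiewicz-type truncation applied to the cutoffs $(|\varphi|-t)_+$ and a layer-cake decomposition, then upgrades this to the strong $L^{2p/(p-2)}$-Sobolev inequality. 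Finally, inserting the zero-order term $r^{-2}|\varphi|^2$ on the right-hand side of \eqref{loSob} is achieved by the trivial bound $|\varphi_{B_o(r)}|^2\leq V_f(B_o(r))^{-1}\int_{B_o(r)}|\varphi|^2 e^{-f}dv$ followed by Minkowski's inequality, which converts the oscillation bound for $\varphi-\varphi_{B_o(r)}$ into the stated bound for $\varphi$ itself.

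The main bookkeeping issue is tracking the dependence on the potential $f$. The doubling constant in \eqref{voldop} is $2^ne^{4A}$ and the Poincar\'e constant in \eqref{Nepoinineq} is $c_1e^{c_2A}$, so each application of these within Saloff-Coste's iteration contributes a factor of the form $e^{O(A)}$. To keep the Sobolev exponent $p$ dependent only on $n$, as the statement requires, I would fix $p=p(n)>2$ at the outset slightly above the dimensional threshold that Saloff-Coste's argument forces on $\mathbb{R}^n$, and crudely absorb every $e^{4A}$-factor appearing at a dyadic scale into a single overall prefactor; all $A$-dependence then collects into the product $c_3\,e^{c_4A}$ in \eqref{loSob}, with $p$, $c_3$, and $c_4$ depending only on $n$. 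This last step—ensuring that the exponent $p$ is not itself forced to grow with $A$ even though the effective doubling dimension $\kappa=\log_2(2^ne^{4A})$ does—is the only subtle point, and is the reason for explicitly quoting that Saloff-Coste's constants scale linearly in the logarithm of the doubling constant.
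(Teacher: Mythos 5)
Your overall route is the one the paper itself takes: the written proof simply declares the lemma to be ``essentially a weighted version of Theorem 2.1 in \cite{[Saloff]}'' (doubling plus Neumann--Poincar\'e implies local Sobolev), and your treatment of the zero-order term --- Minkowski's inequality together with the Cauchy--Schwarz bound $\|\varphi_{B_o(r)}\|_{2p/(p-2)}\le V_f(B_o(r))^{-1/p}\|\varphi\|_{2}$ --- is exactly the second half of the paper's argument (its alternative is to quote Munteanu--Wang's weighted Neumann--Sobolev inequality for the oscillation part, which comes to the same thing).

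The one step that does not work as you describe it is precisely the point you single out as subtle. If the chaining/telescoping argument is run using only the doubling inequality \eqref{voldop}, then crossing $k\approx\log_2(r/s)$ dyadic scales costs a factor $(2^ne^{4A})^{k}\approx (r/s)^{\kappa}$ with $\kappa=\log_2(2^ne^{4A})$: the per-scale factors $e^{4A}$ do not collect into ``a single overall prefactor,'' they compound into an $A$-dependent \emph{exponent}, and the Sobolev exponent that Saloff-Coste's machinery produces is then forced up to $\kappa$, which grows with $A$. There is no general principle by which the exponent in ``doubling $+$ Poincar\'e $\Rightarrow$ Sobolev'' can be pinned below the effective doubling dimension, so ``quoting that the constants scale linearly in the logarithm of the doubling constant'' does not rescue $p=p(n)$. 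What does rescue it is that the paper has a strictly stronger input than doubling: Lemma \ref{comp} with $r_1=R_1=0$ gives the relative volume comparison $V_f(B_x(s))/V_f(B_x(r))\le e^{4A}(s/r)^{n}$ for \emph{all} $0<r<s<R$ at once, i.e.\ reverse doubling with exponent exactly $n$ and a single multiplicative constant $e^{4A}$. Feeding this, rather than the iterated doubling bound, into the volume lower bound required by Saloff-Coste's theorem is what fixes $p=\max\{n,3\}$ depending only on $n$ and pushes the entire $A$-dependence into the prefactor $c_3e^{c_4A}$. With that substitution your argument goes through.
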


\begin{proof}[Sketch proof of Lemma \ref{NeuSob}]
The proof is essentially a weighted version of Theorem 2.1 in \cite{[Saloff]} (see also Theorem 3.1 in \cite{[Saloff1]}).

Besides, we have an alternate proof by applying the local Neumann Sobolev inequality of Munteanu and Wang (see Lemma 3.2 in \cite{[MuWa]})
\[
\|\varphi-\varphi_{B_o(r)}\|_{\frac{2p}{p-2}}\leq \frac{c_3e^{c_4A}\cdot r}{V_f(B_o(r))^{\frac 1p}}\|\nabla\varphi\|_2,
\]
where $\|f\|_m=(\int_{B_o(r)}|f|^md\mu)^{1/m}$. Munteanu and Wang proved this inequality holds without the weighted measure, and it is still true by checking their proof when integrals are with respect to the weighted volume element $e^{-f}dv$. Combining this with the Minkowski inequality
\[
\|\varphi\|_{\frac{2p}{p-2}}\leq\|\varphi-\varphi_{B_o(r)}\|_{\frac{2p}{p-2}}+\|\varphi_{B_o(r)}\|_{\frac{2p}{p-2}},
\]
it is sufficient to prove
\begin{equation*}\label{ineqinte}
\|\varphi_{B_o(r)}\|_{\frac{2p}{p-2}}\leq\frac{c_3e^{c_4A}}{V_f(B_o(r))^{\frac 1p}}\|\varphi\|_2,
\end{equation*}
which follows from Cauchy-Schwarz inequality. Hence the lemma follows.
\end{proof}

Lemma \ref{NeuSob} is a critical step in proving the Harnack inequality by the Moser iteration technique \cite{[Moser]}. We apply it to prove a local mean value inequality for the $f$-heat equation, which is similar to the case when $f$ is constant, obtained by Saloff-Coste \cite{[Saloff]} and Grigor'yan \cite{[Grig]}.

\begin{proposition}\label{PoinDouHarn}
Let $(M,g,e^{-f}dv)$ be an $n$-dimensional complete noncompact smooth metric measure space. Fix $R>0$. Assume that \eqref{loSob} holds up to $R$. Then there exist constants $c_5(n,p)$ and $c_6(n,p)$ such that, for any real $s$, for any $0<\delta<\delta'\leq1$, and for any smooth positive solution $u$ of the $f$-heat equation in the cylinder $Q=B_o(r)\times(s-r^2,s)$, $r<R$, we have
\begin{equation}\label{L1-mean}
\sup_{Q_\delta}\{u\}\leq \frac{c_5e^{c_6A}}{(\delta'-\delta)^{2+p}\,r^2\,V_f(B_o(r))}\cdot\int_{Q_{\delta'}}u\,\,\, d\mu\, dt,
\end{equation}
where $Q_\delta=B_o(\delta r)\times (s-\delta r^2,s)$ and $Q_{\delta'}=B_o(\delta' r)\times (s-\delta' r^2,s)$.
\end{proposition}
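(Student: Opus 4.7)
The plan is to prove \eqref{L1-mean} by a classical Moser iteration adapted to the weighted setting. All integrals are taken with respect to $d\mu=e^{-f}dv$ in space and $dt$ in time, and the key structural point is that $\Delta_f$ is self-adjoint for $d\mu$, so integration by parts produces no extra $\nabla f$ terms. The only analytic input is the weighted Sobolev inequality \eqref{loSob}, with the geometric parameters $V_f(B_o(r))$ and $e^{cA}$ tracked through the constants.

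First I would establish a weighted parabolic Caccioppoli energy estimate: for any $q\ge 1$ and any smooth nonnegative space-time cutoff $\eta$ supported in $Q$, multiply $\partial_t u=\Delta_f u$ by $\eta^2 u^{q-1}$ and integrate over $B_o(r)\times(s-r^2,s)$ against $d\mu\,dt$. Using self-adjointness of $\Delta_f$, Cauchy--Schwarz and absorption yield the standard bound
\[
\sup_t \int \eta^2 u^q\, d\mu + \int\!\!\int \eta^2\,|\nabla u^{q/2}|^2\, d\mu\,dt \le c(q) \int\!\!\int \bigl(|\nabla \eta|^2+\eta\,|\partial_t \eta|\bigr)u^q\, d\mu\,dt.
\]
Next I would apply \eqref{loSob} to $v=\eta u^{q/2}$ and interpolate with this $L^\infty_t L^2_x$ bound via H\"older in $t$ to obtain a parabolic reverse H\"older inequality of the form
\[
\left(\int\!\!\int (\eta u^{q/2})^{2\theta}\, d\mu\,dt\right)^{1/\theta} \le \frac{C\,e^{cA}\, r^2}{V_f(B_o(r))^{2/p}}\int\!\!\int \bigl(|\nabla \eta|^2+\eta|\partial_t \eta|+r^{-2}\eta^2\bigr)u^q\, d\mu\,dt,
\]
for a gain factor $\theta>1$ depending only on $p$.

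With this reverse H\"older estimate I would iterate in the usual Moser fashion: take nested radii $\sigma_k r$ with $\sigma_0=\delta'$ and $\sigma_k\searrow \delta$, cutoffs $\eta_k$ satisfying $|\nabla \eta_k|\lesssim 2^k/((\delta'-\delta)r)$ and $|\partial_t \eta_k|\lesssim 4^k/((\delta'-\delta)r)^2$, and exponents $q_k=2\theta^k$. Telescoping the resulting chain of inequalities produces an $L^2$-mean value estimate
\[
\sup_{Q_\delta} u \le \frac{C\,e^{cA}}{(\delta'-\delta)^{(p+2)/2}\,\bigl(r^2 V_f(B_o(r))\bigr)^{1/2}}\left(\int\!\!\int_{Q_{\delta'}} u^2\, d\mu\,dt\right)^{1/2}.
\]
Finally I would pass from $L^2$ to $L^1$ by the classical bootstrap: insert $u^2\le(\sup_{Q_{\delta'}}u)\cdot u$ into the $L^2$-estimate to obtain $M_\delta^2\le C^2 e^{2cA}(\delta'-\delta)^{-(p+2)}(r^2 V_f(B_o(r)))^{-1}M_{\delta'}\|u\|_{L^1(Q_{\delta'})}$, where $M_\sigma:=\sup_{Q_\sigma}u$. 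A standard iteration lemma applied to the family of cylinders $Q_{\sigma_j}$ with $\sigma_j\nearrow \delta'$ then absorbs the $M_{\delta'}$ factor and yields \eqref{L1-mean}.

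The main obstacle I expect is the bookkeeping in the Moser iteration: at every step the Sobolev constant $e^{c_4 A}$ and factors $2^k$ appear, and one must verify that the accumulated contribution to the exponent of $A$ remains a dimensional constant rather than blowing up. This is controlled by the fact that the iteration exponent $\theta>1$, so the compounded factors enter as convergent geometric series $\sum_k \theta^{-k}$ and $\sum_k k\theta^{-k}$. The other routine check is that the weighted integration by parts produces no extra $\nabla f$ or boundary terms, which follows at once from the identity $\Delta_f=e^{f}\,\mathrm{div}(e^{-f}\nabla\cdot)$ together with the compact support of $\eta$; in particular no curvature hypothesis beyond what is already encoded in \eqref{loSob} is used in this proposition.
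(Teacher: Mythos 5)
Your proposal follows essentially the same route as the paper: a weighted Caccioppoli estimate combined with the Sobolev inequality \eqref{loSob} to get a reverse H\"older inequality with gain $\theta=1+2/p$, Moser iteration to the $L^2$-mean value inequality, and then the standard $u^2\le(\sup u)\cdot u$ bootstrap with an absorption iteration to pass to $L^1$ (the paper's only cosmetic difference is that it applies the $q=2$ energy estimate to the subsolutions $u^m$ rather than testing with $\eta^2u^{q-1}$). The bookkeeping concerns you raise are handled exactly as you predict, via convergent geometric series in the iteration exponents.
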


\begin{proof}
The proof is analogous to Theorem 5.2.9 in \cite{[Saloff2]}. For the readers convenience, we provide a detailed proof. We need to carefully examine the explicit coefficients of the mean value inequality in terms of the Sobolev constants in \eqref{loSob}.

Without loss of generality we assume $\delta'=1$. For any nonnegative function $\phi\in C^{\infty}_0(B)$, $B=B_o(r)$, we have
\[
\int_B(\phi u_t+\nabla\phi\nabla u)d\mu=0.
\]
Let $\phi=\psi^2u$, $\psi\in C^{\infty}_0(B)$, then
\begin{equation*}
\begin{aligned}
\int_B(\psi^2 uu_t+\psi^2|\nabla u|^2)d\mu&\leq 2\left|\int_B u\psi \nabla u\nabla\psi d\mu\right|\\
&\leq 3\int_B|\nabla\psi|^2u^2d\mu+\frac 13\int_B\psi^2|\nabla u|^2d\mu,
\end{aligned}
\end{equation*}
so we get that
\[
\int_B(2\psi^2 uu_t+|\nabla(\psi u)|^2)d\mu\leq 10\|\nabla\psi\|^2_{\infty} \int_{\mathrm{supp}(\psi)} u^2d\mu.
\]
Multiplying a smooth function $\lambda(t)$, which will be determined later, from the above inequality, we get
\begin{equation*}
\begin{aligned}
\frac{\partial}{\partial t}\left(\int_B(\lambda\psi u)^2d\mu\right)+\lambda^2\int_B|\nabla(\psi u)|^2d\mu
\leq C\lambda\left(\lambda\|\nabla \psi\|^2_{\infty}+|\lambda'|\sup\psi^2\right)\int_{\mathrm{supp}(\psi)} u^2d\mu,
\end{aligned}
\end{equation*}
where $C$ is a finite constant which will change from line to line in the following inequalities.

Next we choose $\psi$ and $\lambda$ such that, for any $0<\sigma'<\sigma<1$, $\kappa=\sigma-\sigma'$,
\begin{enumerate}
\item
$0\leq\psi\leq 1$, $\mathrm{supp}(\psi)\subset\sigma B$, $\psi=1$ in $\sigma' B$ and $|\nabla\psi|\leq 2(\kappa r)^{-1}$;
\item
$0\leq\lambda\leq 1$, $\lambda=0$ in $(-\infty,s-\sigma r^2)$,  $\lambda=1$ in $(s-\sigma' r^2,+\infty)$, and $|\lambda'(t)|\leq 2(\kappa r)^{-2}$.
\end{enumerate}
Let $I_\sigma=(s-\sigma r^2,s)$ and $I_\sigma'=(s-\sigma' r^2,s)$. For any $t\in I_{\sigma'}$, integrating the above inequality over $(s-r^2,t)$,
\begin{equation}
\begin{aligned}\label{integso}
\sup_{I_{\sigma'}}\left\{\int_B\psi u^2d\mu\right\}+\int_{B\times I_{\sigma'}}|\nabla(\psi u)|^2d\mu dt
\leq C\left(r \kappa\right)^{-2}\int_{Q_\sigma} u^2d\mu dt.
\end{aligned}
\end{equation}
On the other hand, by the H\"older inequality and the assumption of proposition, for some $p>2$, we have
\begin{equation}
\begin{aligned}\label{ints}
\int_B\varphi^{2(1+\frac 2p)}d\mu
&\leq \left(\int_B|\varphi|^{\frac{2p}{p-2}}d\mu\right)^{\frac{p-2}{p}}\cdot\left(\int_B\varphi^2d\mu\right)^{\frac{2}{p}}\\
&\leq \left(\int_B\varphi^2d\mu\right)^{\frac{2}{p}}\cdot\left(E(B)\int_B(|\nabla \varphi|^2+r^{-2}|\varphi|^2)d\mu\right)
\end{aligned}
\end{equation}
for all $\varphi\in C^{\infty}_0(B)$, where $E(B)=c_3e^{c_4A}r^2V_f(B_o(r))^{-2/p}$. Combining \eqref{integso} and \eqref{ints}, we get
\[
\int_{Q_{\sigma'}}u^{2\theta}d\mu dt
\leq E(B)\left[C(r\kappa)^{-2}\int_{Q_\sigma} u^2d\mu dt\right]^\theta
\]
with $\theta=1+2/p$. For any $m\geq 1$, $u^m$ is also a smooth positive solution of $(\partial_t-\Delta_f)u(x,t)\leq0$. Hence the above inequality indeed implies
\begin{equation}\label{intds2}
\int_{Q_{\sigma'}}u^{2m\theta}d\mu dt
\leq E(B)\left[C(r\kappa)^{-2}\int_{Q_\sigma} u^{2m}d\mu dt\right]^\theta
\end{equation}
for $m\geq1$.

Let $\kappa_i=(1-\delta)2^{-i}$, which satisfies $\Sigma^{\infty}_1\kappa_i=1-\delta$. Let $\sigma_0=1$, $\sigma_{i+1}=\sigma_i-\kappa_i=1-\Sigma^i_1\kappa_j$. Applying \eqref{intds2} for $m=\theta^i$, $\sigma=\sigma_i$, $\sigma'=\sigma_{i+1}$, we have
\[
\int_{Q_{\sigma_{i+1}}}u^{2\theta^{i+1}}d\mu dt
\leq E(B)\left[C^{i+1}((1-\delta)r)^{-2}\int_{Q_{\sigma_i}} u^{2\theta^i}d\mu dt\right]^\theta.
\]
Therefore
\[
\left(\int_{Q_{\sigma_{i+1}}}u^{2\theta^{i+1}}d\mu dt\right)^{\theta^{-(i+1)}}
\leq  C^{\Sigma j\theta^{1-j}}\cdot E(B)^{\Sigma\theta^{-j}}\cdot[(1-\delta)r]^{-2\Sigma\theta^{1-j}}\int_Q u^2d\mu dt,
\]
where $\Sigma$ denotes the summations from $1$ to $i+1$. Letting $i\to \infty$ we get
\begin{equation}\label{prmi}
\sup_{Q_\delta}\{u^2\}\leq C\cdot E(B)^{p/2}\cdot[(1-\delta)r]^{-2-p}||u||^2_{2,Q}
\end{equation}
for some $p>2$.

Formula \eqref{prmi} in fact is a $L_f^2$-mean value inequality. Next, we will apply \eqref{prmi} to prove \eqref{L1-mean} by a different iterative argument. Let $\sigma\in (0,1)$ and $\rho=\sigma+(1-\sigma)/4$. Then \eqref{prmi} implies
\[
\sup_{Q_\sigma}\{u\}\leq F(B)\cdot(1-\sigma)^{-1-p/2}||u||_{2,Q_{\rho}},
\]
where $F(B)=c_3e^{c_4A}\cdot r^{-1}\cdot V_f(B_o(r))^{-1/2}$. Since
\[
\|u\|_{2,Q}\leq \|u\|^{1/2}_{\infty,Q}\cdot\|u\|^{1/2}_{1,Q}
\]
for any $Q$, so we have
\begin{equation}\label{moserit}
\|u\|_{\infty,Q_\sigma} \leq F(B)\cdot\|u\|^{1/2}_{1,Q}\cdot(1-\sigma)^{-1-p/2}\|u\|^{1/2}_{\infty,Q_{\rho}}.
\end{equation}

Now fix $\delta\in (0,1)$ and let $\sigma_0=\delta$, $\sigma_{i+1}=\sigma_i+(1-\sigma_i)/4$, which satisfy $1-\sigma_i=(3/4)^i(1-\delta)$. Applying \eqref{moserit} to $\sigma=\sigma_i$ and $\rho=\sigma_{i+1}$, we have
\[
\|u\|_{\infty,Q_{\sigma_i}} \leq (4/3)^{(1+p/2)i}F(B)\cdot\|u\|^{1/2}_{1,Q}\cdot(1-\delta)^{-1-p/2}\|u\|^{1/2}_{\infty,Q_{\sigma_{i+1}}}.
\]
Therefore, for any $i$,
\[
\|u\|_{\infty,Q_{\delta}} \leq (4/3)^{(1+p/2)\Sigma j(\frac 12)^j}\times\left[F(B)\cdot\|u\|^{1/2}_{1,Q}\cdot(1-\delta)^{-1-p/2}\right]^{\Sigma(\frac 12)^j}
\|u\|^{(\frac 12)^i}_{\infty,Q_{\sigma_i}},
\]
where $\Sigma$ denotes the summations from $0$ to $i-1$. Letting $i\to \infty$ we get
\[
\|u\|_{\infty,Q_{\delta}}\leq (4/3)^{(2+p)}\left[F(B)\cdot\|u\|^{1/2}_{1,Q}\cdot(1-\delta)^{-1-p/2}\right]^2,
\]
that is,
\[
\|u\|_{\infty,Q_{\delta}}\leq (4/3)^{(2+p)}c_5e^{c_6A}(1-\delta)^{-2-p}\cdot r^{-2}\cdot V_f(B_o(r))^{-1}\cdot\|u\|_{1,Q}
\]
and the proposition follows.
\end{proof}


\section{Gaussian upper bounds of the $f$-heat kernel}\label{sec3}

In this section, we prove Gaussian upper bounds of the $f$-heat kernel on smooth metric measure spaces with nonnegative Bakry-\'{E}mery Ricci curvature by applying Proposition \ref{PoinDouHarn} and Lemma \ref{lecomp1}. To prove Theorem \ref{Main1}, first we need a weighted version of Davies' integral estimate \cite{[Davies]}.

\begin{lemma}\label{lemm3.3}
Let $(M,g,e^{-f}dv)$ be an $n$-dimensional complete smooth metric measure space. Let $\lambda_1(M)\geq0$ be the bottom of the $L_f^2$-spectrum of the $f$-Laplacian on $M$. Assume that $B_1$ and $B_2$ are bounded subsets of $M$. Then
\begin{equation}\label{Dvpp}
\int_{B_1}\int_{B_2}H(x,y,t)d\mu(x)d\mu(y)\leq
V_f(B_1)^{1/2}V_f(B_2)^{1/2}\exp\left(-\lambda_1(M)t-\frac{d^2(B_1,B_2)}{4t}\right),
\end{equation}
where $d(B_1,B_2)$ denotes the distance between the sets $B_1$ and $B_2$.
\end{lemma}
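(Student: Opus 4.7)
The plan is to adapt Davies' classical semigroup argument to the weighted setting. The starting point is to rewrite the double integral as a semigroup matrix coefficient in $L_f^2$:
\[
\int_{B_1}\int_{B_2}H(x,y,t)d\mu(x)d\mu(y)=\langle P_t\chi_{B_2},\chi_{B_1}\rangle_{L_f^2},
\]
where $P_t=e^{t\Delta_f}$ is the minimal $f$-heat semigroup and $\chi_{B_i}$ denotes the indicator of $B_i$. This reduces the lemma to bounding matrix coefficients of the conjugated semigroup $e^{-\alpha\phi}P_t e^{\alpha\phi}$ for a carefully chosen exponential weight $\phi$.

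The key technical step is the operator norm estimate
\[
\|e^{-\alpha\phi}P_t e^{\alpha\phi}\|_{L_f^2\to L_f^2}\leq e^{(\alpha^2-\lambda_1(M))t}
\]
for every $\alpha\geq 0$ and every bounded Lipschitz $\phi$ with $|\nabla\phi|\leq 1$. To prove it I take $g\in C_0^\infty(M)$, set $u(\cdot,t)=P_t(e^{\alpha\phi}g)$ and $w=e^{-\alpha\phi}u$, and exploit that $\Delta_f$ is self-adjoint with respect to $d\mu$. A direct integration by parts followed by the variational characterization of $\lambda_1(M)$ gives
\[
\frac{d}{dt}\|w\|_{L_f^2}^2=-2\int_M|\nabla w|^2 d\mu+2\alpha^2\int_M w^2|\nabla\phi|^2 d\mu\leq 2(\alpha^2-\lambda_1(M))\|w\|_{L_f^2}^2,
\]
and Gronwall's inequality delivers the asserted operator norm bound. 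Specialize next to $\phi(x)=\min\{d(x,B_1),N\}$ with $N$ large, which is $1$-Lipschitz, vanishes on $B_1$, and is at least $d(B_1,B_2)$ on $B_2$. Writing $\chi_{B_2}=e^{\alpha\phi}(e^{-\alpha\phi}\chi_{B_2})$ and pairing with $\chi_{B_1}$, the Cauchy-Schwarz inequality produces
\[
\langle P_t\chi_{B_2},\chi_{B_1}\rangle_{L_f^2}\leq e^{(\alpha^2-\lambda_1(M))t}\|e^{-\alpha\phi}\chi_{B_2}\|_{L_f^2}\|e^{\alpha\phi}\chi_{B_1}\|_{L_f^2}\leq e^{(\alpha^2-\lambda_1(M))t-\alpha d(B_1,B_2)}V_f(B_1)^{1/2}V_f(B_2)^{1/2}.
\]
Optimizing over $\alpha\geq 0$, the minimum of $\alpha^2 t-\alpha d(B_1,B_2)$ is attained at $\alpha=d(B_1,B_2)/(2t)$ with value $-d^2(B_1,B_2)/(4t)$, which yields exactly \eqref{Dvpp}.

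The main technical obstacle is the rigorous justification of the integration by parts and of the Rayleigh quotient inequality when applied to $w=e^{-\alpha\phi}P_t(e^{\alpha\phi}g)$, which only lies in $H_f^1$ rather than $C_0^\infty(M)$. I expect to handle this by a standard approximation argument: replace $\phi$ by smooth bounded Lipschitz regularizations of the truncated distance, take $g\in C_0^\infty(M)$, and pass to the limit via dominated convergence together with standard parabolic regularity of the $f$-heat semigroup. Completeness of $(M,g,e^{-f}dv)$ ensures that the minimal $f$-heat semigroup associated to $H$ coincides with the Friedrichs semigroup of $\Delta_f$, so that the full spectral calculus underlying the bound $\int|\nabla w|^2 d\mu\geq\lambda_1(M)\int w^2 d\mu$ is available.
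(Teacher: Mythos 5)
Your proposal is correct and is essentially the same Davies-type argument the paper uses: your conjugated-semigroup bound $\|e^{-\alpha\phi}P_te^{\alpha\phi}\|_{L_f^2\to L_f^2}\leq e^{(\alpha^2-\lambda_1)t}$ is precisely the paper's differential inequality $J'(t)\leq-2\lambda_1(\Omega)J(t)$ for $J(t)=\int u^2e^{\xi}d\mu$ with $\xi=\alpha d(x,B_1)-\tfrac{\alpha^2}{2}t$, written in operator form, and the same cancellation of cross terms drives both computations. The only cosmetic differences are that you apply Cauchy--Schwarz symmetrically and optimize at $\alpha=d(B_1,B_2)/(2t)$ rather than $\alpha=d(B_1,B_2)/t$, and that the paper handles the domain/integration-by-parts issues you flag by working on a compact exhaustion with Dirichlet conditions and passing to the limit, which is the cleanest way to make your approximation step rigorous.
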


\begin{proof}[Proof of Lemma \ref{lemm3.3}]
By the approximation argument, it suffices to prove \eqref{Dvpp} for the $f$-heat kernel $H_\Omega$ of any compact set with boundary $\Omega$ containing $B_1$ and $B_2$. In fact, let $\Omega_i$ be a sequence of compact exhaustion of $M$ such that $\Omega_i\subset\Omega_{i+1}$ and $\cup_{i}\Omega_i=M$. If we prove \eqref{Dvpp} for the $f$-heat kernel $H_{\Omega_i}$ for any $i$, then the lemma follows by letting $i\to\infty$ and observing that $\lambda_1(\Omega_i)\to\lambda_1(M)$, where $\lambda_1(\Omega_i)>0$ is the first Dirichlet eigenvalue of the $f$-Laplacian on $\Omega_i$, and
$\lambda_1(M)=\inf_{\Omega_i\subset M}\lambda_1(\Omega_i)$.

We consider the function $u(x,t)=e^{t\Delta_f|_\Omega}\textbf{1}_{B_1}$ with Dirichlet boundary condition: $u(x,t)=0$ on $\partial \Omega$. Then
\begin{equation}\label{Dvpp1}
\begin{aligned}
\int_{B_2}\int_{B_1}H_\Omega(x,y,t)d\mu(y)d\mu(x)
&=\int_{B_2}\left(\int_{\Omega}H_\Omega(x,y,t)\textbf{1}_{B_1}d\mu(y)\right)d\mu(x)\\
&=\int_{B_2}u(x,t)d\mu(x)\\
&\leq V_f(B_2)^{1/2}\cdot\left(\int_{B_2}u^2(x,t)d\mu(x)\right)^{1/2}.
\end{aligned}
\end{equation}
For some $\alpha>0$, we define $\xi(x,t)=\alpha d(x,B_1)-\frac{\alpha^2}{2}t$ and consider the function
\[
J(t):=\int_\Omega u^2(x,t)e^{\xi(x,t)}d\mu(x).
\]

\textbf{Claim}:  Function $J(t)$ satisfies
\begin{equation}\label{Jmonoto}
J(t)\leq J(t_0)\cdot\exp(-2\lambda_1(\Omega)(t-t_0))
\end{equation}
for all $0<t_0\leq t$.

This claim will be proved later. We now continue to prove Lemma \ref{lemm3.3}. If $x\in B_2$, then $\xi(x,t)\geq\alpha d(B_2,B_1)-\frac{\alpha^2}{2}t$. Hence
\begin{equation}\label{Dvpp2}
\begin{aligned}
J(t)&\geq\int_{B_2} u^2(x,t)e^{\xi(x,t)}d\mu(x)\\
&\geq\exp\left(\alpha d(B_2,B_1)-\frac{\alpha^2}{2}t\right)\int_{B_2} u^2(x,t)d\mu(x).
\end{aligned}
\end{equation}
On the other hand, if $x\in B_1$ then $\xi(x,0)=0$. Using \eqref{Jmonoto} and the continuity of $J(t)$ at $t=0^+$, we have
\begin{equation}\label{Dvpp3}
\begin{aligned}
J(t)&\leq J(0)\cdot\exp\left(-2\lambda_1(\Omega)t\right)\\
&=\int_\Omega e^{\xi(x,0)}\textbf{1}_{B_1}d\mu(x)\cdot\exp\left(-2\lambda_1(\Omega)t\right)\\
&=V_f(B_1)\cdot\exp\left(-2\lambda_1(\Omega)t\right)
\end{aligned}
\end{equation}
Combining \eqref{Dvpp1}, \eqref{Dvpp2} and \eqref{Dvpp3}, and choosing $\alpha=d(B_1,B_2)/t$, we get
\[
\int_{B_1}\int_{B_2}H_\Omega(x,y,t)d\mu(x)d\mu(y)\leq
V_f(B_1)^{1/2}V_f(B_2)^{1/2}\exp\left(-\lambda_1(\Omega)t-\frac{d^2(B_1,B_2)}{4t}\right)
\]
for any compact set $\Omega\subset M$. Lemma \ref{lemm3.3} is proved.

\qed

\textbf{Proof of the claim}. Since $\xi_t\leq -\frac 12|\nabla \xi|^2$ and $u_t=\Delta_fu$, we compute directly
\begin{equation}\label{diff}
\begin{aligned}
J'(t)&\leq2\int_\Omega u \Delta_fue^{\xi}d\mu(x)-\frac 12\int_\Omega u^2e^{\xi}|\nabla \xi|^2d\mu(x)\\
&=-2\int_\Omega |\nabla u|^2e^{\xi}d\mu(x)-2\int_\Omega u\langle\nabla u,\nabla \xi\rangle e^{\xi}d\mu(x)
-\frac 12\int_\Omega u^2e^{\xi}|\nabla \xi|^2d\mu(x)\\
&=-2\int_\Omega (u\nabla\xi+2\nabla u)^2e^{\xi}d\mu(x)\\
&=-2\int_\Omega |\nabla(u e^{\xi/2})|^2d\mu(x).
\end{aligned}
\end{equation}
Moreover the definition of $\lambda_1(\Omega)$ implies
\[
\int_\Omega |\nabla(u e^{\xi/2})|^2d\mu(x)\geq\lambda_1(\Omega)\int_\Omega |u e^{\xi/2}|^2d\mu(x)
=\lambda_1(\Omega)J(t).
\]
Substituting this into \eqref{diff} we get $J'(t)\leq-2\lambda_1(\Omega)J(t)$ and the claim is proved.
\end{proof}
\

Now we prove the upper bounds of $f$-heat kernel by modifying the argument of \cite{[Davies2]} (see also \cite{[Li2]}).

\begin{proof}[Proof of Theorem \ref{Main1}]
We denote $u:(y,s)\mapsto H(x,y,s)$ be a $f$-heat kernel. Under the assumption $t\geq r^2_2$, applying $u$ to Proposition \ref{PoinDouHarn} with a fixed $x\in B_o(R/2)$, we have
\begin{equation}\label{itegA}
\begin{aligned}
\sup_{(y,s)\in Q_\delta}H(x,y,s)&\leq \frac{c_5e^{c_6A}}{r^2_2V_f(B_2)}\cdot\int^t_{t-1/4r^2_2}\int_{B_2}H(x,\zeta,s) d\mu(\zeta)ds\\
&=\frac{c_5e^{c_6A}}{4V_f(B_2)}\cdot\int_{B_2}H(x,\zeta,s') d\mu(\zeta)
\end{aligned}
\end{equation}
for some $s'\in(t-1/4r^2_2, t)$, where $Q_\delta=B_y(\delta r_2)\times(t-\delta r^2_2, t)$ with $0<\delta<1/4$, and $B_2=B_y(r_2)\subset B_o(R)$ for $y\in B_o(R/2)$. Applying Proposition \ref{PoinDouHarn} and the same argument to the positive solution
\[
v(x,s)=\int_{B_2}H(x,\zeta,s) d\mu(\zeta)
\]
of the $f$-heat equation, for the variable $x$ with $t\geq r^2_1$, we also have
\begin{equation}\label{itegB}
\begin{aligned}
\sup_{(x,s)\in \overline{Q}_\delta}\int_{B_2}H(x,\zeta,s) d\mu(\zeta)&\leq\frac{c_5e^{c_6A}}{r^2_1V_f(B_1)}\cdot\int^t_{t-1/4r^2_1}
\int_{B_1}\int_{B_2}H(\xi,\zeta,s)d\mu(\zeta)d\mu(\xi)ds\\
&=\frac{c_5e^{c_6A}}{4V_f(B_1)}\cdot\int_{B_1}\int_{B_2}H(\xi,\zeta,s'')d\mu(\zeta)d\mu(\xi)
\end{aligned}
\end{equation}
for some $s''\in(t-1/4r^2_1, t)$, where $\overline{Q}_\delta=B_x(\delta r_1)\times(t-\delta r^2_1, t)$ with $0<\delta<1/4$, and $B_1=B_x(r_1)\subset B_o(R)$ for $x\in B_o(R/2)$. Now letting $r_1=r_2=\sqrt{t}$ and combining \eqref{itegA} with \eqref{itegB}, the $f$-heat kernel satisfies
\begin{equation}\label{heaup1}
H(x,y,t)\leq\frac{(c_5e^{c_6A})^2}{V_f(B_1)V_f(B_2)}\cdot\int_{B_1}\int_{B_2}H(\xi,\zeta,s'')d\mu(\zeta)d\mu(\xi)
\end{equation}
for all $x,y\in B_o(R/2)$ and $0<t<R^2/4$. Using Lemma \ref{lemm3.3} and noticing that $s''\in(\frac 34t, t)$, \eqref{heaup1} becomes
\begin{equation}\label{heaup2}
H(x,y,t)\leq\frac{(c_5e^{c_6A})^2}{V_f(B_x(\sqrt {t}))^{1/2}V_f(B_y(\sqrt {t}))^{1/2}}
\times\exp\left(-\frac 34\lambda_1t-\frac{d^2(B_1,B_2)}{4t}\right)
\end{equation}
for all $x,y\in B_o(R/2)$ and $0<t<R^2/4$. Notice that if $d(x,y)\leq 2\sqrt{t}$, then $d(B_x(\sqrt {t}),B_y(\sqrt {t}))=0$ and hence
\[
-\frac{d^2(B_x(\sqrt{t}),B_y(\sqrt{t}))}{4t}=0\leq 1-\frac{d^2(x,y)}{4t},
\]
and if $d(x,y)>2\sqrt{t}$, then $d(B_x(\sqrt{t}),B_y(\sqrt{t}))=d(x,y)-2\sqrt{t}$, and hence
\[
-\frac{d^2(B_x(\sqrt{t}),B_y(\sqrt{t}))}{4t}
=-\frac{(d(x,y)-2\sqrt{t})^2}{4t}
\leq -\frac{d^2(x,y)}{4(1+\epsilon)t}+C(\epsilon)
\]
for some constant $C(\epsilon)$, where $\epsilon>0$, and if $\epsilon \to 0$, then the constant $C(\epsilon)\to \infty$.
Therefore, by \eqref{heaup2} we have
\begin{equation}\label{heupp3}
H(x,y,t)\leq\frac{c_7(n,\epsilon)e^{2c_6A}}{V_f(B_x(\sqrt{t})^{1/2}V_f(B_y(\sqrt{t})^{1/2}}
\times\exp\left(-\frac 34\lambda_1t-\frac{d^2(x,y)}{4(1+\epsilon)t}\right)
\end{equation}
for all $x,y\in B_o(\frac 12R)$ and $0<t<R^2/4$. Recall that by Lemma \ref{lecomp1}
\[
V_f(B_x(\sqrt{t}))\leq e^{4A}
\left(\frac{d(x,y)}{\sqrt{t}}+1\right)^n V_f(B_y(\sqrt{t}))
\]
for all $x,y\in B_o(\frac 12R)$ and $0<t<R^2/4$. Therefore we get
\[
H(x,y,t) \leq \frac{c_7(n,\epsilon)e^{(2c_6+2)A}}{V_f(B_x(\sqrt{t})}
\cdot\left(\frac{d(x,y)}{\sqrt{t}}+1\right)^{\frac n2}
\times\exp\left(-\frac 34\lambda_1t-\frac{d^2(x,y)}{(4+\epsilon)t}\right)
\]
for all $x,y\in B_o(\frac 14R)$ and $0<t<R^2/4$.
\end{proof}


\section{$L_f^1$-Liouville theorem}\label{sec4}

In this section, we will prove $L_f^1$-Liouville theorem on complete noncompact smooth metric measure spaces by using the $f$-heat kernel estimates proved in
Section \ref{sec3}. Our result extends the classical $L^1$-Liouville theorem obtained by P. Li \cite{[Li0]} and the weighted versions proved by X.-D. Li \cite{[LD]} and the first author \cite{[Wu2]}.

\vspace{0.5em}

We start from a useful lemma.
\begin{lemma}\label{stocha}
Under the same assumption as in Theorem \ref{Main2}, then the complete smooth metric measure space $(M,g,e^{-f}dv)$ is stochastically complete, i.e.,
\[
\int_MH(x,y,t)e^{-f}dv(y)=1.
\]
\end{lemma}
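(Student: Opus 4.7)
The plan is to invoke Grigor'yan's volume growth criterion for stochastic completeness, adapted to the weighted setting. The classical Grigor'yan criterion asserts that a weighted manifold $(M,g,e^{-f}dv)$ is stochastically complete whenever
$$\int_1^\infty \frac{r\,dr}{\log V_f(B_o(r))}=\infty;$$
since the proof relies only on the self-adjointness of $\Delta_f$ on $L^2(d\mu)$ together with weighted integration by parts and cutoff estimates controlled by $V_f$, it transfers from the Riemannian setting (cf.\ \cite{[Grig]}) to the weighted setting with no essential change. Granted this, the task reduces to verifying that the above volume integral diverges under the hypotheses of Theorem \ref{Main2}.

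The key step is to show that $\log V_f(B_o(r))\leq C(1+r^2)$. Applying the $f$-volume comparison Lemma \ref{comp} at $x=o$ with $r_1=R_1=0$, $r_2=1$, $R_2=r$, and enclosing radius $R=r+1$, one obtains
$$V_f(B_o(r))\leq V_f(B_o(1))\cdot r^n\cdot e^{4A(r+1)}.$$
The quadratic bound $|f|(x)\leq ar^2(x)+b$ gives $A(r+1)=\sup_{B_o(3(r+1))}|f|\leq 9a(r+1)^2+b$, hence
$$\log V_f(B_o(r))\leq C_1+C_2 r^2,$$
with $C_1,C_2$ depending only on $n$, $a$, $b$, and $V_f(B_o(1))$. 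Consequently
$$\int_1^\infty \frac{r\,dr}{\log V_f(B_o(r))}\geq \int_1^\infty \frac{r\,dr}{C_1+C_2 r^2}=\infty,$$
and Grigor'yan's criterion yields $\int_M H(x,y,t)\,e^{-f}dv(y)=1$ as desired.

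The only subtle point I expect is justifying the weighted form of Grigor'yan's criterion. In practice one can either cite the weighted generalization directly, or inspect Grigor'yan's original argument and verify that each step—introducing the bounded candidate $1-\int_M H(x,y,t)\,d\mu(y)$, multiplying by a time-dependent exponential weight supported on an annular cutoff, and integrating against $d\mu$—goes through once $\Delta$ is replaced by $\Delta_f$ and Riemannian volume by weighted volume. This is essentially routine bookkeeping rather than a substantive obstacle, and it is the only place where the quadratic-growth hypothesis on $f$ enters, via the volume bound above.
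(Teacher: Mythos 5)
Your proposal is correct and follows essentially the same route as the paper: apply the $f$-volume comparison (Lemma \ref{comp}) with the quadratic bound on $f$ to get $V_f(B_o(R))\leq C R^n e^{cR^2}$, conclude that $\int_1^\infty R/\log V_f(B_o(R))\,dR=\infty$, and invoke Grigor'yan's volume criterion. The only difference is that the paper cites Grigor'yan's Theorem 3.13 in the weighted-manifold survey \cite{[Grig3]}, where the criterion is already established for weighted manifolds, so the adaptation you flag as the ``only subtle point'' is not actually needed.
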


\begin{proof}
In Lemma \ref{comp}, letting $r_1=R_1=0$, $r_2=1$, $R_2=R>1$ and $x=o$, if $|f|(x)\leq ar^2(x)+b$, then
\[
V_f(B_o(R))\leq C(n,b)R^n e^{c(n,a)R^2}
\]
for all $R>1$. Hence
\begin{equation}\label{integ}
\int^{\infty}_1\frac{R}{\log V_f(B_o(R))}dR=\infty.
\end{equation}
By Grigor'yan's Theorem 3.13 in \cite{[Grig3]}, this implies that the smooth metric measure space $(M,g,e^{-f}dv)$ is stochastically complete.
\end{proof}

Now we prove Theorem \ref{Main2} following the arguments of Li in \cite{[Li0]} . We first prove the following integration by parts formula.

\begin{theorem}\label{liouL1}
Under the same assumption as in Theorem \ref{Main2}, for any nonnegative $L_f^1$-integrable $f$-subharmonic function $u$, we have
\[
\int_M {\Delta_f}_y H(x,y,t)u(y)d\mu(y)=\int_M H(x,y,t)\Delta_fu(y)d\mu(y).
\]
\end{theorem}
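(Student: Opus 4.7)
The plan is a classical cutoff-and-limit argument in the spirit of P.~Li \cite{[Li0]}. Fix $x\in M$ and $t>0$. Let $\phi_R\in C_0^\infty(B_o(2R))$ satisfy $0\leq\phi_R\leq 1$, $\phi_R\equiv 1$ on $B_o(R)$, and $|\nabla\phi_R|\leq C/R$. Since $\phi_R^2 H(x,\cdot,t)$ is compactly supported in $y$, Green's identity with respect to the weighted measure yields
\begin{equation*}
\int_M \phi_R^2\bigl(u\,\Delta_f H - H\,\Delta_f u\bigr)\,d\mu \;=\; 2\int_M \phi_R\,\nabla\phi_R\cdot\bigl(H\nabla u - u\nabla H\bigr)\,d\mu.
\end{equation*}
The goal is to let $R\to\infty$: the left-hand side should converge to the desired identity, while the right-hand boundary integral should vanish.

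For the left-hand side, $\int_M \phi_R^2 H\,\Delta_f u\,d\mu \nearrow \int_M H\,\Delta_f u\,d\mu$ by monotone convergence, since $u\geq 0$, $\Delta_f u\geq 0$, $H\geq 0$, and $\phi_R^2 \nearrow 1$ pointwise. For the companion term, one uses $\Delta_f H = \partial_t H$ together with Theorem~\ref{Main1} (applied to $\partial_t H$ via standard parabolic regularity on a nested family of cylinders) to produce a Gaussian-type pointwise upper estimate on $|\Delta_f H(x,y,t)|$. Combined with $u\in L_f^1$ and the quadratic growth of $f$, this forces $u\,|\Delta_f H|\in L^1(d\mu)$, and dominated convergence applies.

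The main obstacle is showing the boundary term on the annulus $A_R := B_o(2R)\setminus B_o(R)$ tends to zero. Applying Cauchy--Schwarz separately,
\begin{align*}
\Bigl|\int_M \phi_R H\nabla\phi_R\!\cdot\!\nabla u\,d\mu\Bigr|^2 &\leq \Bigl(\int_{A_R}\!H|\nabla\phi_R|^2\,d\mu\Bigr)\Bigl(\int_M\!\phi_R^2 H|\nabla u|^2\,d\mu\Bigr),\\
\Bigl|\int_M \phi_R u\nabla\phi_R\!\cdot\!\nabla H\,d\mu\Bigr|^2 &\leq \Bigl(\int_{A_R}\!uH|\nabla\phi_R|^2\,d\mu\Bigr)\Bigl(\int_M\!\phi_R^2 u\tfrac{|\nabla H|^2}{H}\,d\mu\Bigr).
\end{align*}
Because $|\nabla\phi_R|\leq C/R$, the stochastic completeness of Lemma~\ref{stocha} gives $\int_M H\,d\mu=1$, and $u\in L_f^1$ ensures $uH\in L^1(d\mu)$; hence both annular factors are $O(R^{-2})$, and in fact $o(R^{-2})$ as $R\to\infty$. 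The technical heart is bounding the second factors. A weighted Caccioppoli inequality for $u$, obtained by integrating by parts the nonnegative quantity $\int \phi_R^2 uH\,\Delta_f u\,d\mu$, yields
\begin{equation*}
\int_M \phi_R^2 H|\nabla u|^2\,d\mu \leq C\int_{A_R} u^2 H|\nabla\phi_R|^2\,d\mu + C\int_M \phi_R^2 u^2 \tfrac{|\nabla H|^2}{H}\,d\mu,
\end{equation*}
while an analogous parabolic Caccioppoli estimate for $H$ (exploiting that $H$ solves the $f$-heat equation) controls $\int \phi_R^2 u|\nabla H|^2/H\,d\mu$ by integrals of $uH$ over slightly larger cylinders. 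Pointwise bounds on the $f$-subharmonic $u$ over $B_o(2R)$ come from the mean-value inequality of Proposition~\ref{PoinDouHarn}, and combining these with the Gaussian decay from Theorem~\ref{Main1}, the volume growth of Lemma~\ref{lecomp1}, and the quadratic bound $|f|\leq ar^2+b$, the Gaussian decay of $H$ in $d(x,y)$ dominates the at-most-exponential growth of the constants $e^{cA(R)}$, so the Cauchy--Schwarz products vanish as $R\to\infty$, completing the argument.
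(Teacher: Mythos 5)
Your overall strategy (Green's identity plus a cutoff, then kill the error terms using the mean value inequality, the Gaussian upper bound, and Caccioppoli-type estimates) is the same as the paper's, but as written the proposal has genuine gaps at the decisive steps. The most serious one is your Cauchy--Schwarz splitting: it leaves behind the \emph{global} factors $\int_M\phi_R^2H|\nabla u|^2\,d\mu$ and $\int_M\phi_R^2u\,|\nabla H|^2/H\,d\mu$, and you never show these are finite, let alone bounded uniformly in $R$. Your proposed Caccioppoli chain is circular (each estimate controls one factor by the other) and, worse, hinges on controlling $|\nabla H|^2/H$, which is precisely the Li--Yau-type gradient quantity that the paper emphasizes is \emph{not} available under $\mathrm{Ric}_f\geq0$ alone (this is the whole reason the authors avoid the Li--Yau route). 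The paper sidesteps this by keeping everything on the annulus: it bounds $\int_{\mathrm{ann}}|\nabla u|\,d\mu\leq Ce^{\alpha R^2}\|u\|_{L^1(\mu)}$ via a cutoff Caccioppoli estimate plus the mean value inequality, bounds $\int_{\mathrm{ann}}|\nabla H|^2\,d\mu$ using the $L^2$ identity $\int_M(\Delta_fH)^2\,d\mu\leq Ct^{-2}H(x,x,t)$ obtained from the eigenfunction expansion (not from a pointwise bound on $\partial_tH$), and then lets the Gaussian factor $\exp(-cR^2/t)$ of $\sup_{\mathrm{ann}}H$ absorb the $e^{\alpha R^2}$ growth. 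Relatedly, your pointwise ``Gaussian-type bound on $|\Delta_fH|$ via Theorem~\ref{Main1} and parabolic regularity'' is unsubstantiated: $\partial_tH$ is a sign-changing solution, so Proposition~\ref{PoinDouHarn} (stated for positive solutions) does not apply to it directly.

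The second gap is quantitative: the cancellation between the Gaussian decay $\exp(-d^2/(5t))$ and the growth $e^{cA(R)}\sim e^{caR^2}$ of the constants only occurs when $t$ is \emph{small} (one needs roughly $1/(5t)>ca+\alpha$). Your argument, as stated, claims the boundary terms vanish for all $t$, which is false; the paper first proves the identity for $t\in(0,T)$ with $T$ small and then extends to all $t>0$ by the semigroup property $e^{(s+t)\Delta_f}=e^{s\Delta_f}e^{t\Delta_f}$ (Step 5). Without that final step the theorem is not established for all $t>0$. To repair your write-up you should (i) replace the global Cauchy--Schwarz factors by annulus-localized ones and control $\int_{\mathrm{ann}}|\nabla H|^2$ through the $L^2$ bound on $\Delta_fH$, (ii) drop the pointwise bound on $\Delta_fH$, and (iii) add the small-$t$ restriction and the semigroup extension.
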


\begin{proof}[Proof of Theorem \ref{liouL1}]
Applying Green's theorem to $B_o(R)$, we have
\begin{equation*}
\begin{aligned}
&\left|\int_{B_o(R)}{\Delta_f}_y H(x,y,t)u(y)d\mu(y) -\int_{B_o(R)}H(x,y,t)\Delta_f u(y)d\mu(y)\right|\\
=&\left|\int_{\partial B_o(R)}\frac{\partial}{\partial r} H(x,y,t)u(y)d\mu_{\sigma,R}(y)
-\int_{\partial B_o(R)}H(x,y,t)\frac{\partial}{\partial r}u(y)d\mu_{\sigma,R}(y)\right|\\
\leq& \int_{\partial B_o(R)}|\nabla H|(x,y,t)u(y)d\mu_{\sigma,R}(y) +\int_{\partial B_o(R)}H(x,y,t)|\nabla u|(y)d\mu_{\sigma,R}(y),
\end{aligned}
\end{equation*}
where $d\mu_{\sigma,R}$ denotes the weighted area measure  on $\partial B_o(R)$ induced by $d\mu$. We shall show that the above two boundary integrals vanish as $R\to \infty$. Without loss of generality, we assume $x\in B_o(R/8)$. 

\vspace{0.5em}

\emph{Step 1}. Let $u(x)$ be a nonnegative $f$-subharmonic function. Since $\mathrm{Ric}_f\geq 0$ and $|f|\leq ar^2(x)+b$., by Proposition \ref{PoinDouHarn} we get
\begin{equation} \label{mjbd}
\sup_{B_o(R)}u(x)\leq Ce^{\alpha R^2}V_f^{-1}(2R)\int_{B_o(2R)}u(y)d\mu(y),
\end{equation}
where constants $C$ and $\alpha$ depend on $n$, $a$ and $b$. Let $\phi(y)=\phi(r(y))$ be a nonnegative cut-off function satisfying $0\leq\phi\leq1$,
$|\nabla\phi|\leq\sqrt{3}$ and
\begin{equation*}
\phi(r(y))=\left\{ \begin{aligned}
&1&&\mathrm{on}\,\,B_o(R+1)\backslash B_o(R),\\
&0&&\mathrm{on}\,\,B_o(R-1)\cup(M\backslash B_o(R+2)).\\
\end{aligned} \right.
\end{equation*}
Since $u$ is $f$-subharmonic, by the Cauchy-Schwarz inequality we have
\begin{equation*}
\begin{aligned}
0\leq\int_M\phi^2u\Delta_f u d\mu
=&-\int_M \nabla(\phi^2u)\nabla u d\mu\\
=&-2\int_M \phi u\langle\nabla\phi\nabla u\rangle d\mu -\int_M \phi^2|\nabla u|^2d\mu\\
\leq&2\int_M |\nabla\phi|^2u^2d\mu -\frac12\int_M \phi^2|\nabla u|^2d\mu.
\end{aligned}
\end{equation*}
By \eqref{mjbd}, we have that
\begin{equation*}
\begin{aligned}
\int_{B_o(R+1)\backslash B_o(R)}|\nabla u|^2d\mu\leq& 4\int_M|\nabla \phi|^2u^2d\mu\\
\leq&12\int_{B_o(R+2)}u^2d\mu\\
\leq&12\sup_{B_o(R+2)}u\cdot\|u\|_{L^1(\mu)}\\
\leq&\frac{Ce^{\alpha(R+2)^2}}{V_f(2R+4)}\cdot\|u\|_{L^1(\mu)}^2.
\end{aligned}
\end{equation*}
On the other hand, the Cauchy-Schwarz inequality implies that
\[
\int_{B_o(R+1)\backslash B_o(R)}|\nabla u|d\mu\leq
\left(\int_{B_o(R+1)\backslash B_o(R)}|\nabla u|^2d\mu\right)^{1/2}\cdot [V_f(R+1)\backslash V_f(R)]^{1/2}.
\]
Combining the above two inequalities, we have
\begin{equation}\label{guji}
\int_{B_o(R+1)\backslash B_o(R)}|\nabla u|d\mu\leq C_1e^{\alpha R^2}\cdot\|u\|_{L^1(\mu)},
\end{equation}
where $C_1=C_1(n,a,b)$.

\vspace{0.5em}

\emph{Step 2}. By letting $\epsilon=1$ in Theorem \ref{Main1},
the $f$-heat kernel $H(x,y,t)$ satisfies
\begin{equation}\label{guji2}
H(x,y,t) \leq \frac{c_3}{V_f(B_x(\sqrt{t})}\cdot\left(\frac{d(x,y)}{\sqrt{t}}+1\right)^{\frac n2}
\times\exp\left(c_4R^2-\frac{d^2(x,y)}{5t}\right)
\end{equation}
for any $x,y\in B_o(R)$ and $0<t<R^2/4$, where $c_3=c_3(n,b)$ and $c_4=c_4(n,a)$. Together with \eqref{guji} we get
\begin{equation*}
\begin{aligned}
J_1:=&\int_{B_o(R+1)\backslash B_o(R)}H(x,y,t)|\nabla u|(y)d\mu(y)\\
\leq&\sup_{y\in {B_o(R+1)\backslash B_o(R)}}H(x,y,t)\cdot\int_{B_o(R+1)\backslash B_o(R)}|\nabla u|d\mu\\
\leq&\frac{C_2\|u\|_{L^1(\mu)}}{V_f(B_x(\sqrt{t}))}\cdot\left(\frac{R+1+d(o,x)}{\sqrt{t}}+1\right)^{\frac n2}
\times\exp\left(-\frac{(R-d(o,x))^2}{5t}+c_4(R+1)^2\right),
\end{aligned}
\end{equation*}
where $C_2=C_2(n,a,b)$.

Thus, for $T$ sufficiently small and for all $t\in (0,T)$ there exists a fixed constant $\beta>0$ such that
\[
J_1 \leq \frac{C_3\|u\|_{L^1(\mu)}}{V_f(B_x(\sqrt{t}))}\cdot\left(\frac{R+1+d(o,x)}{\sqrt{t}}+1\right)^{\frac n2}
\times\exp\left(-\beta R^2+c\frac{d^2(o,x)}{t}\right),
\]
where $C_3=C_3(n,a,b)$. Hence for all $t\in(0,T)$ and all $x\in M$, $J_1$ tends to zero as $R$ tends to infinity.

\vspace{0.5em}

\emph{Step 3}.
Consider the integral with respect to $d\mu$,
\begin{equation*}
\begin{aligned}
\int_M\phi^2(y)|\nabla H|^2(x,y,t)
&=-2\int_M\big\langle H(x,y,t)\nabla\phi(y),\phi(y)\nabla H(x,y,t)\big\rangle\\
&\quad-\int_M\phi^2(y)H(x,y,t)\Delta_f H(x,y,t)\\
&\leq2\int_M|\nabla\phi|^2(y)H^2(x,y,t)+\frac 12\int_M\phi^2(y)|\nabla H|^2(x,y,t)\\
&\quad-\int_M\phi^2(y)H(x,y,t)\Delta_f H(x,y,t).
\end{aligned}
\end{equation*}
This implies
\begin{equation}
\begin{aligned}\label{cutfuest}
&\int_{B_o(R+1)\backslash B_o(R)}|\nabla H|^2\\
&\leq\int_M\phi^2(y)|\nabla H|^2(x,y,t)\\
&\leq4\int_M|\nabla\phi|^2H^2-2\int_M\phi^2H\Delta_f H\\
&\leq12\int_{B_o(R+2)\backslash B_o(R-1)}H^2+2\int_{B_o(R+2)\backslash B_o(R-1)}H|\Delta_f H|\\
&\leq12\int_{B_o(R+2)\backslash B_o(R-1)}H^2 +2\left(\int_{B_o(R+2)\backslash B_o(R-1)}H^2\right)^{\frac 12}
\left(\int_M(\Delta_f H)^2\right)^{\frac 12}.
\end{aligned}
\end{equation}
By Lemma \ref{stocha}, we have
\[
\int_MH(x,y,t)d\mu(y)=1
\]
for all $x\in M$ and $t>0$. By \eqref{guji2} we get
\begin{equation}
\begin{aligned}\label{cutfuest2}
\int_{B_o(R+2)\backslash B_o(R-1)}H^2(x,y,t)d\mu
&\leq \sup_{y\in B_o(R+2)\backslash B_o(R-1)}H(x,y,t)\\
&\leq\frac{c_3}{V_f(B_x(\sqrt{t})}\cdot\left(\frac{R+2-d(o,x)}{\sqrt{t}}+1\right)^{\frac n2}\\
&\quad\times\exp\left[-\frac{(R-1-d(o,x))^2}{5t}+c_4(R+2)^2\right].
\end{aligned}
\end{equation}

We \emph{claim} that there exists a constant $C_4>0$ such that
\begin{equation}\label{Lapest}
\int_M(\Delta_f H)^2(x,y,t)d\mu\leq\frac{C_4}{t^2}H(x,x,t).
\end{equation}

Because $f$-heat kernel on $M$ can be obtained by taking the limit of $f$-heat kernels on a compact exhaustion of $M$, it suffices to prove the claim for  $f$-heat kernel on any compact subdomain of $M$.  Let $H(x,y,t)$ is a $f$-heat kernel on a compact subdomain $\Omega\subset M$, by the eigenfunction expansion, we have
\[
H(x,y,t)=\sum^{\infty}_ie^{-\lambda_it}\psi_i(x)\psi_i(y),
\]
where $\{\psi_i\}$ are orthonormal basis of the space of $L_f^2$ functions with Dirichlet boundary value satisfying the equation
\[
\Delta_f\psi_i=-\lambda_i\psi_i.
\]
Differentiating with respect to the variable $y$, we have
\[
\Delta_fH(x,y,t)=-\sum^{\infty}_i\lambda_ie^{-\lambda_it}\psi_i(x)\psi_i(y).
\]
Notice that $s^2e^{-2s}\leq C_5e^{-s}$ for all $0\leq s<\infty$, therefore
\[
\int_M(\Delta_fH)^2d\mu(y)\leq C_5t^{-2}
\sum^{\infty}_ie^{-\lambda_it}\psi^2_i(x)
=C_5t^{-2} H(x,x,t)
\]
and claim \eqref{Lapest} follows.
\vspace{0.5cm}

Combining \eqref{cutfuest}, \eqref{cutfuest2} and \eqref{Lapest}, we obtain
\begin{equation*}
\begin{aligned}
&\int_{B_o(R+1)\backslash B_o(R)}|\nabla H|^2d\mu
\leq C_6\left[V^{-1}_f+t^{-1}V^{-\frac 12}_f H^{\frac 12}(x,x,t)\right]\\
&\quad\quad\quad\quad\times\left(\frac{R+2-d(o,x)}{\sqrt{t}}+1\right)^{\frac n2}
\times\exp\left[-\frac{(R-1-d(o,x))^2}{10t}+c_4(R+2)^2\right].
\end{aligned}
\end{equation*}
where $V_f=V_f(B_x(\sqrt{t}))$. By the Cauchy-Schwarz inequality we get,
\begin{equation}
\begin{aligned}\label{gujiest2}
&\int_{B_o(R+1)\backslash B_o(R)}|\nabla H|d\mu\\
&\leq\left[V_f(B_o(R+1))\backslash V_f(B_o(R))\right]^{1/2}
\times\left[\int_{B_o(R+1)\backslash B_o(R)}|\nabla H|^2d\mu\right]^{1/2}\\
&\leq C_6V^{1/2}_f(B_o(R+1))\left[V^{-1}_f+t^{-1}V^{-\frac 12}_f H^{\frac 12}(x,x,t)\right]^{1/2}\\
&\quad\times\left(\frac{R+2-d(o,x)}{\sqrt{t}}+1\right)^{\frac n4}\times\exp\left[-\frac{(R-1-d(o,x))^2}{20t}+\frac{c_9}{2}(R+2)^2\right].
\end{aligned}
\end{equation}

Therefore, by \eqref{mjbd} and \eqref{gujiest2}, by Cauchy-Schwarz inequality we have
\begin{equation*}
\begin{aligned}
J_2:=&\int_{B_o(R+1)\backslash B_o(R)}|\nabla H(x,y,t)|u(y)d\mu(y)\\
\leq&\sup_{y\in B_o(R+1)\backslash B_o(R)}u(y)\cdot\int_{B_o(R+1)\backslash B_o(R)}|\nabla H(x,y,t)|d\mu(y)\\
\leq& \frac{C_7\|u\|_{L^1(\mu)}}{V^{1/2}_f(B_o(2R+2))}
\cdot\left[V^{-1}_f+t^{-1}V^{-\frac 12}_f H^{\frac 12}(x,x,t)\right]^{1/2}\\
&\times\left(\frac{R+2-d(o,x)}{\sqrt{t}}+1\right)^{\frac n4}\times\exp\left[-\frac{(R-1-d(o,x))^2}{20t}+c_{10}(R+2)^2\right],
\end{aligned}
\end{equation*}
where $V_f=V_f(B_x(\sqrt{t}))$. Similar to the case of $J_1$, by choosing $T$ sufficiently small, for all $t\in(0,T)$ and all $x\in M$, $J_2$ also tends to zero when $R$ tends to infinity.

\vspace{0.5em}

\emph{Step 4}.
By the mean value theorem, for any $R>0$ there exists $\bar{R}\in (R,R+1)$ such that
\begin{equation*}
\begin{aligned}
J:&=\int_{\partial B_o(\bar{R})}\left[H(x,y,t)|\nabla u|(y)+|\nabla H|(x,y,t)u(y)\right]d\mu_{\sigma,\bar{R}}(y)\\
&=\int_{B_o(R+1)\backslash B_p(R)}\left[H(x,y,t)|\nabla u|(y)+|\nabla H|(x,y,t)u(y)\right]d\mu(y)\\
&=J_1+J_2.
\end{aligned}
\end{equation*}
By step 2 and step 3, we know that by choosing $T$ sufficiently small, for all $t\in(0,T)$ and all $x\in M$, $J$ tends to zero as $\bar{R}$ (and hence $R$) tends to infinity. Therefore we finish the proof of Theorem \ref{liouL1} for $T$ sufficiently small.

\vspace{0.5em}

\emph{Step 5}. Using the semigroup property of the $f$-heat equation,
\begin{equation*}
\begin{aligned}
\frac{\partial}{\partial(s+t)}\left(e^{(s+t)\Delta_f}u\right)&=
\frac{\partial}{\partial t}\left(e^{s\Delta_f}e^{t\Delta_f}u\right)=
e^{s\Delta_f}\frac{\partial}{\partial t}\left(e^{t\Delta_f}u\right)\\
&=e^{s\Delta_f}e^{t\Delta_f}(\Delta_fu)
=e^{(s+t)\Delta_f}(\Delta_fu),
\end{aligned}
\end{equation*}
we prove Theorem \ref{liouL1} for all time $t>0$.
\end{proof}

Next we prove the $L_f^1$ Liouville theorem, Theorem \ref{Main2}.

\begin{proof}[Proof of Theorem \ref{Main2}]
Let $u(x)$ be a nonnegative, $L_f^1$-integrable and $f$-subharmonic function defined on $M$. We define a space-time function
\[
u(x,t)=\int_MH(x,y,t)u(y)d\mu(y)
\]
with initial data $u(x,0)=u(x)$. From Theorem \ref{liouL1}, we conclude that
\begin{equation}
\begin{aligned}\label{increfun}
\frac{\partial}{\partial t}u(x,t)
&=\int_M\frac{\partial}{\partial t}H(x,y,t)u(y)d\mu(y)\\
&=\int_M{\Delta_f}_yH(x,y,t)u(y)d\mu(y)\\
&=\int_MH(x,y,t){\Delta_f}_yu(y)d\mu(y)\geq 0,
\end{aligned}
\end{equation}
that is, $u(x,t)$ is increasing in $t$. By Lemma \ref{stocha},
\[
\int_MH(x,y,t)d\mu(y)=1
\]
for all $x\in M$ and $t>0$. So we have
\[
\int_Mu(x,t)d\mu(x)=\int_M\int_MH(x,y,t)u(y)d\mu(y)d\mu(x)=\int_M u(y)d\mu(y).
\]
Since $u(x,t)$ is increasing in $t$, so $u(x,t)=u(x)$ and hence $u(x)$ is a nonnegative $f$-harmonic function,
i.e. $\Delta_fu(x)=0$.

On the other hand, for any positive constant $a$, let us define a new
function $h(x)=\min\{u(x),a\}$. Then $h$ satisfies
\[
0\leq h(x)\leq u(x), \quad|\nabla h|\leq |\nabla u|\quad \mathrm{and}\quad\Delta_f h(x)\leq 0.
\]
In particular, $h$ satisfies estimates \eqref{mjbd} and \eqref{guji}. Similarly we define $h(x,t)$ and
\begin{equation*}
\begin{aligned}
\frac{\partial}{\partial t}h(x,t)&=\frac{\partial}{\partial t}\int_MH(x,y,t)h(y)d\mu(y)\\
&=\int_MH(x,y,t){\Delta_f}_yh(y)d\mu(y)\leq 0.
\end{aligned}
\end{equation*}
By the same argument, we have that $\Delta_fh(x)=0$.

By the regularity theory of $f$-harmonic functions, this is impossible unless $h=u$ or $h=a$. Since $a$ is arbitrary and $u$ is nonnegative, so $u$ must be identically constant. The theorem then follows from the fact that the absolute value of a $f$-harmonic function is a nonnegative $f$-subharmonic function.
\end{proof}


\section{$L_f^1$-uniqueness property}\label{sec5}

For the completeness we provide a detailed proof of Theorem \ref{Main3} following the arguments of Li in \cite{[Li0]}.

\begin{proof}[Proof of Theorem \ref{Main3}]
Let $u(x,t)\in L_f^1$ be a nonnegative function satisfying the assumption in Theorem \ref{Main3}.
For $\epsilon>0$, let $u_\epsilon(x)=u(x,\epsilon)$. Define
\begin{equation}\label{semgro1}
e^{t\Delta_f}u_\epsilon(x)=\int_MH(x,y,t)u_\epsilon(y)d\mu(y)
\end{equation}
and
\[
F_\epsilon(x,t)=\min\{0,u(x,t+\epsilon)-e^{t\Delta_f}u_\epsilon(x)\}.
\]
Then $F_\epsilon(x,t)$ is nonnegative and satisfies
\[
\lim_{t\to 0}F_\epsilon(x,t)=0\quad \mathrm{and}
\quad(\partial_t-\Delta_f)F_\epsilon(x,t)\leq0.
\]
Let $T>0$ be fixed. Let $h(x)=\int^T_0F_\epsilon(x,t)dt$,
which satisfies
\begin{equation}\label{condst1}
\begin{split}
\Delta_fh(x)=&\int^T_0\Delta_fF_\epsilon(x,t)dt\\
\geq&\int^T_0\partial_tF_\epsilon(x,t)dt=F_\epsilon(x,T)\geq0.
\end{split}
\end{equation}
Moreover,
\begin{equation*}
\begin{aligned}
\int_Mh(x)d\mu&=\int^T_0\int_MF_\epsilon(x,t)d\mu dt\\
&\leq\int^T_0\int_M|u(x,t+\epsilon)-e^{t\Delta_f}u_\epsilon(x)|d\mu dt\\
&\leq\int^T_0\int_Mu(x,t+\epsilon)d\mu dt+\int^T_0\int_Me^{t\Delta_f}u_\epsilon(x)d\mu dt<\infty,
\end{aligned}
\end{equation*}
where the first term on the right hand side is finite from our assumption, and the second term is finite because $e^{t\Delta_f}$ is a contractive semigroup
in $L_f^1$. Therefore, $h(x)$ is a nonnegative $L_f^1$-integrable $f$-subharmonic function. By Theorem \ref{Main2}, $h(x)$ must be constant. Combining with \eqref{condst1} we have $F_\epsilon(x,t)=0$. Hence $F_\epsilon(x,T)\equiv0$ for all $x\in M$ and $T>0$, which implies
\begin{equation}\label{tianj1}
e^{t\Delta_f}u_\epsilon(x)\geq u(x,t+\epsilon).
\end{equation}

Next we estimate the function $e^{t\Delta_f}u_\epsilon(x)$ in \eqref{semgro1}. Applying the upper bound estimate of the heat kernel $H(x,y,t)$ and letting $R=2d(x,y)+1$, we have
\begin{equation*}
\begin{aligned}
e^{t\Delta_f}u_\epsilon(x)&\leq\frac{C}{V_f(B_x(\sqrt{t})}
\cdot\left(\frac{d(x,y)}{\sqrt{t}}+1\right)^{\frac n2}\\
&\quad\times\int_M \left[\exp\left(\tilde{C}d^2(x,y)-\frac{d^2(x,y)}{5t}\right)u(y,\epsilon)\right]d\mu(y).
\end{aligned}
\end{equation*}
Thus there exists a sufficiently small $t_0>0$ such that for all $0<t<t_0$, we have $\lim_{\epsilon \to0}e^{t\Delta_f}u_\epsilon(x)=0$ by the assumption
\[
\lim_{\epsilon\to 0}\int_Mu(x,\epsilon)d\mu(x)=0.
\]

Therefore by the semigroup property, we conclude that $\lim_{\epsilon \to0}e^{t\Delta_f}u_\epsilon(x)=0$ for all $x\in M$ and $t>0$. Combining with \eqref{tianj1} we get $u(x,t)\leq 0$. Therefore $u(x,t)\equiv 0$.
\end{proof}

\section{Appendix}\label{apdix}

In the appendix we solve for the $f$-heat kernel of $1$-dimensional steady Gaussian soliton $(\mathbb{R},\ g_0, e^{-f}dx)$, where $g_0$ is the Euclidean metric, and $f=k x$ with $k=\pm 1$. The method is standard separation of variables. Suppose the $f$-heat kernel is of the form
\[
H(x,y,t)=
\varphi(y)\phi(x)\psi(t)\times\exp\left(-\frac{|x-y|^2}{4t}\right).
\]
For a fixed $y$, we get
\begin{equation*}
\begin{split}
H_t &= \varphi\phi e^{-\frac{|x-y|^2}{4t}}\left(\psi_t+\psi\frac{|x-y|^2}{4t^2}\right),\\
H_x &= \varphi\psi e^{-\frac{|x-y|^2}{4t}}\left(\phi_x-\phi\frac{x-y}{2t}\right),\\
H_{xx} &= \varphi\psi e^{-\frac{|x-y|^2}{4t}}
\left(\phi_{xx}+\phi\frac{|x-y|^2}{4t^2}-\phi_x\frac{x-y}{t}-\phi\frac{1}{2t}\right).
\end{split}
\end{equation*}
So $H_t=H_{xx}-f_xH_x$ implies
\begin{equation*}
\begin{split}
\phi\left(\psi_t+\psi\frac{|x-y|^2}{4t^2}\right) &= \psi
\left(\phi_{xx}+\phi\frac{|x-y|^2}{4t^2}-\phi_x\frac{x-y}{t}-\frac{\phi}{2t}\right)-k\psi\left(\phi_x-\phi\frac{x-y}{2t}\right).
\end{split}
\end{equation*}
That is,
\[
\frac{\psi_t}{\psi}=
\frac{\phi_{xx}-k\phi_x}{\phi}-\frac{x-y}{2t}\cdot\frac{2\phi_x-k\phi}{\phi}-\frac{1}{2t}.
\]
Therefore
\begin{equation*}
\begin{split}
\frac{\phi_{xx}-k\phi_x}{\phi} &= C_1,\hspace{1cm}
\frac{(2\phi_x-k\phi)(x-y)}{\phi} = C_2,\hspace{1cm}
\frac{\psi_t}{\psi} = C_1-\frac{1+C_2}{2t},
\end{split}
\end{equation*}
From above, their solutions are
\begin{equation*}
\begin{split}
\phi &= C_3 e^{\frac{1}{2}kx},\\
\psi &= C_4 \frac{1}{\sqrt{t}}e^{-4/t},
\end{split}
\end{equation*}
where $C_1$, $C_2$, $C_3$, $C_4$ are constants.

By the initial condition $\lim_{t\to 0}u(x,t)=\delta_{f,y}(x)$ we get $\varphi(y)= e^{\frac{1}{2}ky}$, and $C_3C_4=\frac{1}{2\sqrt{\pi}}$. Therefore the $f$-heat kernel is
\[
H(x,y,t)=\frac{e^{\pm \frac{x+y}{2}}\cdot e^{-t/4}}{(4\pi t)^{1/2}}
\times\exp\left(-\frac{|x-y|^2}{4t}\right).
\]
It is easy to check that $\int_{\mathbb{R}}H(x,y,t)e^{-f(x)}dx=1$, which confirms the stochastic completeness proved in Lemma 4.1.

\qed

\bibliographystyle{amsplain}

\end{document}